\documentclass[a4paper]{amsart}

\usepackage[top=1.5in,right=1in,left=1in,bottom=1in,a4paper]{geometry}
\usepackage{amsmath}
\usepackage{amsthm}
\usepackage{amssymb}
\usepackage{amsfonts}
\usepackage{enumerate}
\usepackage{array}
\usepackage[all,ps,dvips,color]{xy}
\usepackage{pstricks}

\newtheoremstyle{theoremstyle}
  {10pt}      
  {5pt}       
  {\itshape}  
  {}          
  {\bfseries} 
  {:}         
  {.5em}      
  {}          

\newtheoremstyle{examplestyle}
  {10pt}      
  {5pt}       
  {}          
  {}          
  {\bfseries} 
  {:}         
  {.5em}      
  {}          

\theoremstyle{theoremstyle}
\newtheorem{theorem}{Theorem}[section]
\newtheorem*{theorem*}{Theorem}
\newtheorem{lemma}[theorem]{Lemma}
\newtheorem{proposition}[theorem]{Proposition}
\newtheorem*{proposition*}{Proposition}
\newtheorem{corollary}[theorem]{Corollary}
\newtheorem*{corollary*}{Corollary}

\theoremstyle{examplestyle}

\newtheorem{definition}[theorem]{Definition}
\newtheorem*{definition*}{Definition}
\newtheorem{remark}[theorem]{Remark}
\newtheorem*{remarks*}{Remarks}

\newtheorem*{remark*}{Remark}

\newcommand{\comment}[1]{}

\newcommand{\PBS}[1]{\let\temp=\\#1\let\\=\temp}

\newcommand{\pic}{\operatorname{Pic}}
\newcommand{\sh}[1]{\mathcal{#1}}
\newcommand{\rk}{\operatorname{rk}}

\newcommand{\Ext}{\operatorname{Ext}}
\newcommand{\End}{\operatorname{End}}

\newcommand{\Z}{\mathbb{Z}}
\newcommand{\C}{\mathbb{C}}

\newcommand{\R}{\mathbb{R}}

\newcommand{\im}{\operatorname{im}}

\begin{document}

\title[Exceptional Sequences]{Examples for exceptional sequences of invertible sheaves on rational surfaces}

\subjclass[2000]{Primary: 14J26, 14M25, 18E30; Secondary: 14F05}

\author{Markus Perling}
\address{Fakult\"at f\"ur Mathematik\\Ruhr-Universit\"at Bochum\\Universit\"atsstra\ss e 150\\44780 Bochum\\Germany}
\email{Markus.Perling@rub.de}

\date{April 2009}

\begin{abstract}
In this article we survey recent results of joint work with Lutz Hille on exceptional sequences of
invertible sheaves on rational surfaces and give examples.
\end{abstract}

\maketitle

\tableofcontents

\section{Introduction}\label{introduction}

The purpose of this note is to give a survey on the results of joint work with Lutz Hille
\cite{HillePerling08} and to provide
some explicit examples. The general problem addressed in \cite{HillePerling08}  is to
understand the derived category of coherent sheaves on an algebraic variety
(for an introduction and overview on derived categories over algebraic
varieties we refer to \cite{Huybrechts06}; see also \cite{Bridgeland06}). An important approach to
understand derived categories is to construct exceptional sequences:

\begin{definition*}[\cite{Rudakov90}]
A coherent sheaf $\sh{E}$ on $X$ an algebraic variety $X$ is called {\em exceptional}\, if
$\sh{E}$ is simple and $\Ext^i_X(\sh{E}, \sh{E})$ $= 0$ for every $i \neq 0$. A sequence
$\sh{E}_1, \dots, \sh{E}_n$ of exceptional sheaves is called an {\em exceptional}\, sequence if
$\Ext^k_X(\sh{E}_i, \sh{E}_j) = 0$ for all $k$ and for all $i > j$. If an exceptional
sequence generates $D^b(X)$, then it is called {\em full}. A {\em strongly} exceptional
sequence is an exceptional sequence such that $\Ext^k_X(\sh{E}_i, \sh{E}_j) = 0$ for all
$k > 0$ and all $i, j$.
\end{definition*}

In \cite{HillePerling08} we have considered exceptional and strongly exceptional sequences on
rational surfaces which consist of invertible sheaves. Though this seems to be a quite restrictive
setting, it still covers a large and interesting class of varieties, and the results uncover
interesting aspects of the derived category which had not been noticed so far. In particular,
it seems that toric geometry plays an unexpectedly important role.

Besides giving a survey, we also want to consider in these notes some aspects related to
noncommutative geometry, which have not been touched in \cite{HillePerling08}.
By results of Bondal \cite{Bondal90}, the existence of a full strongly
exceptional sequence implies the existence of an equivalence of categories
\begin{equation*}
\operatorname{{\bf R}Hom}(\sh{T}, \, . \,) : D^b(X) \longrightarrow D^b(A-\operatorname{mod}),
\end{equation*}
where $\sh{T}:= \bigoplus_{i = 1}^n \sh{E}_i$, which is sometimes called a tilting sheaf, and
$A := \End(\sh{T})$. This way, a strongly exceptional sequence provides a non-commutative
model for $X$. The algebra $A$ is
finite and can be described as a path algebra with relations. One of the themes of this note will
be to provide some interesting and explicit examples of such algebras.

\section{Exceptional sequences and toric systems}

Our general setting will be that $X$ is a smooth complete rational surface defined over some algebraically
closed field. We will always denote $n$ the rank of the Grothendieck group of $X$. Note that $\pic(X)$
is a free $\Z$-module of rank $n - 2$. We are interested in exceptional sequences of invertible sheaves.
That is, we are looking for divisors $E_1, \dots, E_n$ on $X$
such that the sheaves $\sh{O}(E_1), \dots, \sh{O}(E_n)$ form a (strongly) exceptional sequence.
In this situation, computation of $\Ext$-groups reduces to compute cohomologies of divisors,
i.e. for any two invertible sheaves $\mathcal{L}$, $\mathcal{M}$, there exists an isomorphism
\begin{equation*}
\Ext^k_X(\sh{L}, \sh{M}) \cong H^k(X, \sh{L}^* \otimes \sh{M})
\end{equation*}
for every $k$. So, the $\sh{O}(E_i)$ to form a (strongly) exceptional sequence, it is necessary
in addition that $H^k\big(X, \sh{O}(E_j - E_i) \big) = 0$ for every $k$ and every $i > j$ (or for
every $k > 0$
and every $i, j$, respectively). For simplifying notation, we will usually omit references to $X$
and for some divisor $D$ on $X$ we
write $h^k(D)$ instead of $\dim H^k\big(X, \sh{O}(D)\big)$. Note that for
any divisor $D$ and any blow-up $b : X' \rightarrow X$ the vector spaces $H^k\big(X, \sh{O}(D)\big)$
and $H^k\big(X', b^*\sh{O}(D)\big)$ are naturally isomorphic. This way there will be no ambiguities
on where we determine the cohomologies of $D$.

In essence, we are facing a quite peculiar problem of cohomology vanishing: constructing a strongly
exceptional
sequence of invertible sheaves is equivalent to finding a set of divisors $E_1, \dots, E_n$ such that
$h^k(E_i - E_j) = 0$ for all $k > 0$ and all $i, j$. Formulated like this, the problem looks somewhat
ill-defined, as it seems to imply a huge numerical complexity (the Picard group has rank $n - 2$ and
one has to check cohomology vanishing for $\sim n^2$ divisors). This can be tackled rather
explicitly for small examples (see \cite{HillePerling06}), but such a procedure becomes very
unwieldy in general.

Simple examples show that standard arguments
for cohomology vanishing via Kawamata-Viehweg type theorems usually are not sufficient to solve
the problem (however, see \cite{perling07a} for another kind of vanishing theorem which may be
helpful here).
So, to deal more effectively with the problem, we have to make more use of the geometry of $X$.
Recall that $\pic(X)$ (or better, $\pic(X) \otimes_\Z \R$) is endowed with a bilinear quadratic form
of signature $(1, -1, \dots, -1)$, which is given by the intersection form. Moreover, the geometry
of $X$ gives us a canonical
quadratic form on $\pic(X)$, the Euler form $\chi$, which by the Riemann-Roch theorem is of the form
$$\chi(D) = 1 + \frac{1}{2}(D^2 - K_X . D)$$ for $D \in \pic(X)$. The condition that $E_1, \dots, E_n$
yield an exceptional sequence implies that $\chi(E_j - E_i) = 0$ for every $i > j$. So, we get another
bit of information which tells us that we should look for divisors which are sitting on a quadratic
hypersurface in $\pic(X)$ which is given by the equation $\chi(-D) = 0$ for $D \in \pic(X)$. However,
looking for integral solutions of a quadratic equation still does not help very much. To improve
the situation, we have to exploit the Euler form more effectively. Considering its symmetrization
and antisymmetrization:
\begin{align*}
\chi(D) + \chi(-D) & = 2 + D^2 \quad \text{ and}\\
\chi(D) - \chi(-D) & = -K_X . D,
\end{align*}
we get immediately:

\begin{lemma}\label{acyclicitylemma}
Let $D, E \in \pic(X)$ such that $\chi(-D) = \chi(-E) = 0$, then
\begin{enumerate}[(i)]
\item\label{acyclicitylemmai}
$\chi(D) = -K_X . D = D^2 + 2$;
\item\label{acyclicitycorollaryii}
$\chi(-D - E) = 0$  iff $E . D = 1$ iff $\chi(D) + \chi(E) = \chi(D + E)$.
\end{enumerate}
\end{lemma}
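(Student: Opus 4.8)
The plan is to reduce everything to the Riemann--Roch expression $\chi(D) = 1 + \frac{1}{2}(D^2 - K_X.D)$ together with the symmetrization and antisymmetrization relations displayed just above the lemma. Part~(i) is then immediate and uses only the hypothesis on $D$: since $\chi(-D) = 0$, the symmetrization $\chi(D) + \chi(-D) = 2 + D^2$ gives $\chi(D) = D^2 + 2$, while the antisymmetrization $\chi(D) - \chi(-D) = -K_X.D$ gives $\chi(D) = -K_X.D$. No assumption on $E$ enters here.

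For part~(ii) the key step I would isolate first is a general \emph{polarization identity}, valid for all $D, E \in \pic(X)$ with no vanishing hypothesis:
\[
\chi(D+E) = \chi(D) + \chi(E) + (D.E - 1).
\]
I would derive this by expanding all three terms with Riemann--Roch and cancelling: the cross term $D.E$ arises from $(D+E)^2 = D^2 + 2D.E + E^2$ via the factor $\tfrac12$, the $K_X$-contributions cancel by linearity of intersection, and the constant $-1$ comes from $1 - 2$. This one-line computation is the single fact on which both equivalences rest, so I would write it out carefully.

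Granting the identity, the two equivalences fall out by specializing its arguments. Applying it to the pair $(D, E)$ gives $\chi(D+E) - \chi(D) - \chi(E) = D.E - 1$, so $\chi(D) + \chi(E) = \chi(D+E)$ holds precisely when $D.E = 1$; this is the second stated equivalence, and again needs no hypothesis. Applying the same identity to the pair $(-D, -E)$, using $(-D).(-E) = D.E$ and the assumptions $\chi(-D) = \chi(-E) = 0$, yields $\chi(-D-E) = D.E - 1$, so $\chi(-D-E) = 0$ holds precisely when $D.E = 1$. Chaining the two gives the full biconditional chain.

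Since the whole argument is a direct computation, I do not expect a serious obstacle; the only thing to watch is the bookkeeping of constant terms and signs (in particular the two $-1$'s must not be lost). The one genuine idea is to factor the problem through the symmetric polarization identity rather than expanding $\chi(-D-E)$ from scratch: this both shortens the algebra and makes transparent why the interaction of two divisors on the quadric $\chi(-\,\cdot\,)=0$ is governed by the single intersection number $D.E$.
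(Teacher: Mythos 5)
Your proposal is correct and matches the paper's (implicit) argument: the paper derives the lemma ``immediately'' from the Riemann--Roch formula via the displayed symmetrization and antisymmetrization identities, which is exactly the computation you perform, with your polarization identity $\chi(D+E)=\chi(D)+\chi(E)+(D.E-1)$ simply being the organized form of that same bilinear expansion. Both equivalences in (ii), including your observation that the second one needs no vanishing hypothesis, check out.
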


Now, using Lemma \ref{acyclicitylemma}, we can bring an exceptional sequence $\sh{O}(E_1), \dots,
\sh{O}(E_n)$ into a convenient normal form. For this, we pass to the difference vectors and
set $A_i := E_{i + 1} - E_i$ for $1 \leq i < n$ and
\begin{equation*}
A_n := -K_X - \sum_{i = 1}^{n - 1} A_i.
\end{equation*}
By Lemma \ref{acyclicitylemma}, we get:
\begin{enumerate}[(i)]
\item\label{toricsystemdefi} $A_i . A_{i + 1} = 1$ for $1 \leq i \leq n$;
\item\label{toricsystemdefii} $A_i . A_j = 0$  for $i \neq j$ and $\{i, j\} \neq \{k, k + 1\}$ for some $1 \leq k \leq n$;
\item\label{toricsystemdefiii} $\sum_{i = 1}^n A_i = -K_X$.
\end{enumerate}
Note that here the indices are to be read in the cyclic sense, i.e. we identify integers modulo $n$. This leads to
the following definition:

\begin{definition}
We call a set of divisors on $X$ which satisfy conditions (\ref{toricsystemdefi}), 
(\ref{toricsystemdefii}), and (\ref{toricsystemdefiii}) above a {\em toric system}.
\end{definition}

A toric system seems to be the most efficient form to encode an exceptional sequence of invertible sheaves.
But why the name? It turns out that a toric system encodes a smooth complete rational surface.
Consider the following short exact sequence:
\begin{equation*}
0 \longrightarrow \pic(X) \overset{A}{\longrightarrow} \Z^n \longrightarrow \Z^2 \longrightarrow 0,
\end{equation*}
where $A$ maps a divisor class $D$ to the tuple $(A_1 . D, \dots, A_n . D)$. Denote $l_1, \dots, l_n$
the images of the standard basis fo $\Z^n$ in $\Z^2$. It is shown in \cite{HillePerling08} that the
$l_i$ form a cyclically ordered set of primitive elements in $\Z^2$ such that $l_i, l_i + 1$ forms a
basis for $\Z^2$ for every $i$. This is precisely the defining data for a smooth complete toric surface.
On the other hand, the construction of the $l_i$ from the $A_i$ is an example for {\em Gale duality}.
In particular, if we dualize above short exact sequence, we obtain a standard sequence from toric
geometry:
\begin{equation*}
0 \longrightarrow \Z^2 \overset{L}{\longrightarrow} \Z^n \longrightarrow \pic(Y) \longrightarrow 0,
\end{equation*}
where $Y$ denotes the toric surface associated to the toric system $A_1, \dots, A_n$ and $L$ the
matrix whose rows are the $l_i$ which act via the standard Euklidian inner product as linear forms
on $\Z^2$. By this duality, $\pic(X)$ and $\pic(Y)$ and their intersection products can canonically be
identified. So we get the following remarkable structural result:

\begin{theorem}[\cite{HillePerling08}]\label{structuretheorem}
Let $X$ be a smooth complete rational surface, let $\sh{O}_X(E_1), \dots, \sh{O}_X(E_n)$ be a
full exceptional sequence of invertible sheaves on $X$, and set $E_{n + 1} := E_1 - K_X$. Then
to this sequence there is associated in a canonical way a smooth
complete toric surface with torus invariant prime divisors $D_1, \dots, D_n$ such that
$D_i^2 + 2 = \chi(E_{i + 1} - E_i)$ for all $1 \leq i \leq n$.
\end{theorem}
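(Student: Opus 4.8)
The plan is to extract the toric surface $Y$ from the toric system $A_i := E_{i+1} - E_i$ ($1 \le i \le n$, cyclically) attached to the sequence, and then to identify the self-intersections $D_i^2$ on $Y$ with the numbers $A_i^2$ on $X$; this reduces the asserted formula to Lemma~\ref{acyclicitylemma}.

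First I would check that every $A_i$ lies on the quadric $\chi(-\,\cdot\,) = 0$. For $1 \le i < n$ this is immediate, since $i < i+1$ forces $H^k(E_i - E_{i+1}) = 0$ for all $k$ by exceptionality, whence $\chi(-A_i) = \chi(E_i - E_{i+1}) = 0$. For $i = n$ I would use the Serre-duality symmetry $\chi(D) = \chi(K_X - D)$ of the Euler form together with $A_n = E_1 - K_X - E_n$ to rewrite $\chi(-A_n) = \chi(E_1 - E_n)$, which vanishes because $1 < n$. Once $\chi(-A_i) = 0$ is known, Lemma~\ref{acyclicitylemma}(\ref{acyclicitylemmai}) gives $\chi(E_{i+1} - E_i) = \chi(A_i) = A_i^2 + 2$, so it remains only to prove $D_i^2 = A_i^2$.

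To produce $Y$, I would feed the $A_i$ into the Gale-dual sequence $0 \to \pic(X) \xrightarrow{A} \Z^n \to \Z^2 \to 0$ and invoke the result of \cite{HillePerling08} recalled above: the images $l_1, \dots, l_n$ of the standard basis are primitive, cyclically ordered, and satisfy that $l_i, l_{i+1}$ is a $\Z$-basis for each $i$. This is precisely a smooth complete fan, so it defines $Y$ canonically, with torus-invariant prime divisors $D_1, \dots, D_n$ along the rays $\R_{\ge 0} l_i$. Dualizing, and using that $\pic(X)$ is self-dual under its (unimodular) intersection form, identifies $\pic(Y) = \operatorname{coker}(L)$ with $\pic(X)$ so that the class of the $i$-th standard vector $[e_i]$ corresponds to $D_i$ and to $A_i$ simultaneously; in particular both lattices inherit the same relations $\sum_i \langle m, l_i\rangle [e_i] = 0$ for $m \in \Z^2$.

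The heart of the argument, and the step I expect to be the real obstacle, is to show that the intersection form of $X$ and the combinatorial intersection form of $Y$ coincide under this identification, i.e.\ $A_i^2 = D_i^2$. I would first observe that the two forms already agree off the diagonal: $A_i . A_j$ is $1$ for cyclically adjacent indices and $0$ otherwise by the toric-system axioms (\ref{toricsystemdefi})–(\ref{toricsystemdefii}), while $D_i . D_j$ obeys the same adjacency rule, since two distinct torus-invariant divisors meet (transversally, in one point) exactly when their rays span a cone. To settle the diagonal I would pair the common relation $\sum_i \langle m, l_i\rangle [e_i] = 0$ with $[e_k]$ in either intersection form; the off-diagonal contributions are matched, so in both forms one gets $\langle m, l_k\rangle\, [e_k]^2 = -\langle m, l_{k-1} + l_{k+1}\rangle$, with $[e_k]^2$ the self-intersection of the common class. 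Choosing $m$ with $\langle m, l_k\rangle = 1$ (possible since $l_k$ is primitive) then forces $[e_k]^2$ to equal the same integer $-\langle m, l_{k-1} + l_{k+1}\rangle$ for both forms. Hence $A_k^2 = D_k^2$, and combining with the second paragraph yields $D_i^2 + 2 = A_i^2 + 2 = \chi(E_{i+1} - E_i)$ for every $i$; the canonicity of the whole construction is inherited from that of the Gale-dual sequence.
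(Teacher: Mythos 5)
Your proposal is correct and takes essentially the same route as the paper: form the toric system $A_i = E_{i+1}-E_i$, use Lemma~\ref{acyclicitylemma} to get $\chi(A_i) = A_i^2+2$, and extract the toric surface $Y$ from the Gale-dual sequence $0 \to \pic(X) \xrightarrow{A} \Z^n \to \Z^2 \to 0$, identifying $\pic(X)$ with $\pic(Y)$ so that $A_i$ corresponds to $D_i$. The only difference is that you spell out details the survey leaves implicit --- the Serre-duality step $\chi(-A_n) = \chi(E_1 - E_n) = 0$, and the verification that the two intersection forms agree (off-diagonal entries by the toric-system axioms and toric adjacency, diagonal entries forced by pairing the common relations $\sum_i \langle m, l_i\rangle [e_i] = 0$ with $[e_k]$) --- which the paper subsumes in the assertion that the duality canonically identifies the intersection products.
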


This theorem at once gives us information about the possible values for $\chi(A_i)$ and the combinatorial
types of quivers associated to strongly exceptional sequences, but it does not give us a method for
constructing them. This will be discussed in the next section. We conclude with some remarks on some
ambiguities related to toric systems. Of course, for a given exceptional sequence $\sh{O}(E_1), \dots,
\sh{O}(E_n)$, the associated toric system $A_1, \dots, A_n$  is unique. However, from $A_1, \dots,
A_n$ we will get back the original sequence only up to twist, i.e. by summing up the $A_i$ we get
$\sh{O}_X, \sh{O}(A_1), \sh{O}(A_1 + A_2),\dots, \sh{O}(\sum_{i = 1}^{n - 1} A_i)$. Of course, this
does not really matter, as we are interested only in the differences among the divisors.

By construction, the condition that the cohomologies vanish a priori apply only to the $A_i$ with
$1 \leq i < n$, but it follows from Serre duality that also  $h^k(-A_n) = 0$ for all $k$. More
generally, it follows that every cyclic enumeration of the $A_i$ gives rise to an exceptional
sequence. Moreover, if $A_1, \dots, A_n$ yields an exceptional sequence, then so does
$A_n, \dots, A_1$.

The case where the $E_i$ form a strongly exceptional sequence has less symmetries in general. In
particular, we cannot expect that the higher cohomologies of $A_n$ vanish in this case. However,
if $A_1, \dots, A_n$ give rise to a strongly exceptional sequence, then also
\begin{equation*}
A_{n - 1}, \dots, A_1, A_n
\end{equation*}
does. This system then corresponds to the dual strongly exceptional sequence, given by
$\sh{O}(-E_n), \dots,$ $\sh{O}(-E_1)$.

\section{Examples of toric systems and standard augmentations}

The easiest examples of toric systems exist on $\mathbb{P}^2$ and the Hirzebruch surfaces $\mathbb{F}_a$.
We first consider the case of $\mathbb{P}^2$. Its Picard group is isomorphic to $\Z$ and generated by the
class $H$ of a line. In this case, there exists a unique toric system which is given by
\begin{equation*}
H, H, H,
\end{equation*}
which corresponds to a strongly exceptional sequence given by $\sh{O}_{\mathbb{P}^2}$, $\sh{O}_{\mathbb{P}^2}(1)$,
$\sh{O}_{\mathbb{P}^2}(2)$. The quiver with relations of the endomorphism algebra is shown in figure
\ref{p2quiver} (see also \cite{Bondal90}).
\begin{figure}[htbp]
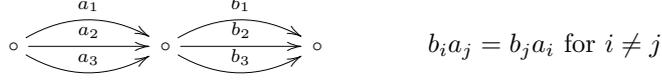

\begin{equation*}
\xygraph{!{0;<1cm,0cm>:<0cm,1cm>::}
[] *+[o]+@{o}="1"
[rr] *+[o]+@{o}="2"
[rr] *+[o]+@{o}="3"
[rrr] *\txt{$b_i a_j = b_j a_i$ for $i \neq j$}
"1":@/-1mm/"2"^(0.55){a_2}
"1":@/^1em/"2"^{a_1}
"1":@/_1em/"2"^{a_3}
"2":@/-1mm/"3"^(0.55){b_2}
"2":@/^1em/"3"^{b_1}
"2":@/_1em/"3"^{b_3}
}
\end{equation*}
\caption{The quiver associated to $\sh{O}_{\mathbb{P}^2}$, $\sh{O}_{\mathbb{P}^2}(1)$, $\sh{O}_{\mathbb{P}^2}(2)$.}
\label{p2quiver}
\end{figure}
The toric surface associated to this toric system is $\mathbb{P}^2$ again.
For the case of $\mathbb{F}_a$, we have a slightly more complicated picture. First we choose a basis $P, Q$ for
$\pic(\mathbb{F}_a)$, where $P$ is the class of a fiber of the ruling and $Q$ is chosen such that $Q - aP$ is
the class of the base of the ruling. This choice is such that $P$ and $Q$ are the two generators of the nef cone
of $\mathbb{F}_a$ and hence have no higher cohomologies. With respect to these coordinates, there exists a
unique family of toric systems which is associated to exceptional sequences:
\begin{equation*}
P, sP + Q, P, -(a + s)P + Q, \text{ where } s \in \Z.
\end{equation*}
The associated toric surface is isomorphic to $\mathbb{F}_{a + 2s}$.
The associated sequence is strongly exceptional for $s \geq -1$. For both $ s, -(a + s) \geq -1$, we obtain
{\em cyclic} strongly exceptional sequences, which we will discuss in section \ref{cyclicsection}, and which lead
to special quivers in each case. Otherwise, we will get rather uniformly looking quivers. Figure \ref{fa1} shows
the quiver with relations for For $s = -1$ and $a \geq 3$,
\begin{figure}[htbp]
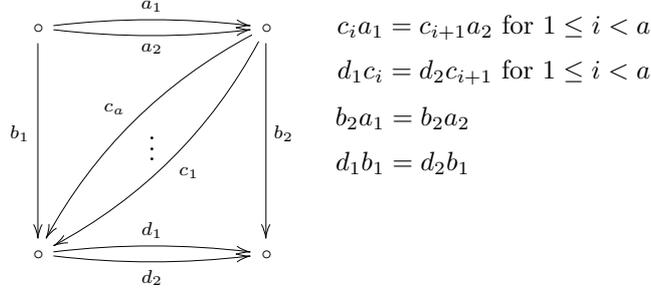

\begin{equation*}
\xygraph{!{0;<1.5cm,0cm>:<0cm,3cm>::}
[] *+[o]+@{o}="1"
[rr] *+[o]+@{o}="2"
[d(0.5)l] *+{\vdots}
[d(0.5)l] *+[o]+@{o}="3"
[rr] *+[o]+@{o}="4"
[urr] *\txt{$c_i a_1 = c_{i + 1} a_2$ for $1 \leq i < a$}
[d(0.2)] *\txt{$d_1 c_i = d_2 c_{i + 1}$ for $1 \leq i < a$}
[d(0.2)l(0.8)] *\txt{$b_2 a_1 = b_2 a_2$}
[d(0.2)] *\txt{$d_1 b_1 = d_2 b_1$}
"1":@/^0.3em/"2"^{a_1}
"1":@/_0.3em/"2"_{a_2}
"2":@/^1em/"3"^{c_1}
"2":@/_1em/"3"_{c_a}
"3":@/^0.3em/"4"^{d_1}
"3":@/_0.3em/"4"_{d_2}
"1":@/-1mm/"3"_{b_1}
"2":@/-1mm/"4"^{b_2}
}
\end{equation*}
\caption{The quiver associated to $\sh{O}_{\mathbb{F}_a}$, $\sh{O}_{\mathbb{F}_a}(P)$, $\sh{O}_{\mathbb{F}_a}(Q)$, $\sh{O}_{\mathbb{F}_a}(P + Q)$.}
\label{fa1}
\end{figure}
figure \ref{fa2} for $s \geq 0$ and $a + s \geq 1$.
\begin{figure}[htbp]
\begin{equation*}
\xygraph{!{0;<1.5cm,0cm>:<0cm,2cm>::}
[] *+[o]+@{o}="1"
[rr] *+[o]+@{o}="2"
[u(0.2)r] *+{\vdots}
[d(0.3)] *+{\vdots}
[u(0.1)r] *+[o]+@{o}="3"
[rr] *+[o]+@{o}="4"
"1":@/^0.3em/"2"^{a_1}
"1":@/_0.3em/"2"_{a_2}
"2":@/^1.5em/"3"^{b_0}
"2":@/^0.3em/"3"^(0.4){b_s}
"2":@/_0.3em/"3"_(0.4){c_0}
"2":@/_1.5em/"3"_{c_{s + a}}
"3":@/^0.3em/"4"^{d_1}
"3":@/_0.3em/"4"_{d_2}
}
\end{equation*}
\begin{align*}
b_i a_1 &= b_{i + 1} a_2 \text{ for } 0 \leq i < s  & d_1 c_i &= d_2 c_{i + 1} \text{ for } 0 \leq i < s + a\\
c_i a_1 &= c_{i + 1} a_2 \text{ for } 0 \leq i < s + a & d_1 b_i &= d_2 b_{i + 1} \text{ for } 0 \leq i < s
\end{align*}
\caption{The quiver associated to $\sh{O}_{\mathbb{F}_a}$, $\sh{O}_{\mathbb{F}_a}(P)$, $\sh{O}_{\mathbb{F}_a}\big((s + 1)P + Q\big)$,
$\sh{O}_{\mathbb{F}_a}\big((s + 2)P + Q\big)$ for $s \geq 0$ and $a + s \geq 1$.}
\label{fa2}
\end{figure}
The quivers and relations can quite directly be computed using toric methods (see \cite{perling6}, for instance). We call
toric systems on $\mathbb{P}^2$ or $\mathbb{F}_a$ {\em standard} toric systems. By the classification of rational surfaces,
we know that for any given surface $X$, there exists a sequence of blow-downs
\begin{equation*}
X = X_t \overset{b_t}{\longrightarrow} X_{t - 1} \overset{b_{t - 1}}{\longrightarrow} \cdots \overset{b_2}{\longrightarrow} X_1
\overset{b_1}{\longrightarrow} X_0,
\end{equation*}
where $X_0$ is either $\mathbb{P}^2$ or some $\mathbb{F}_a$. We assume now that one such sequence (which is far from unique,
in general) is fixed. Denote $R_1, \dots, R_t$ the classes of the total transforms on $X$ of the exceptional divisors of the blow-ups
$b_i$. Using these, we obtain a nice basis for $\pic(X)$. If $X_0 \cong \mathbb{P}^2$, we get $H, R_1, \dots, R_t$ as a basis with
the following intersection relations:
\begin{equation*}
H^2 = 1, \quad R_i^2 = -1 \text{ for every }  i, \quad R_i . R_j = 0 \text{ for every } i \neq j,  \quad
H . R_i = 0 \text{ for every }  i.
\end{equation*}
If $X_0 \cong \mathbb{F}_a$, we get $P, Q, R_1, \dots, R_t$ (where we identify $H$ and $P, Q$, respectively, with their
pull-backs in $\pic(X)$) with
\begin{gather*}
P^2 = 0, \quad Q^2 = a, \quad P . Q = 1, \quad  R_i^2 = -1 \text{ for every }  i, \\  R_i . R_j = 0 \text{ for every } i \neq j, \quad
P . R_i = Q. R_i = 0 \text{ for every }  i.
\end{gather*}
If we start with any given toric system on $X_0$, then we can successively produce new toric systems as follows. Assume that
$X_0 \cong \mathbb{P}^2$ and start with the unique toric system $H, H, H$ on $\mathbb{P}^2$.
After blowing up once, we can augment this toric system by inserting $R_1$ in any place
and subtracting $R_1$ in the two neighbouring positions, i.e., up to symmetries, we obtain a toric
system $H - R_1, R_1, H - R_1, H$ on $X_1$. Continuing with this, we essentially get two
possibilities on $X_2$, namely
\begin{align*}
& H - R_1 - R_2, R_2, R_1 - R_2, H - R_1, H\\
& H - R_1, R_1, H - R_1 - R_2, R_2, H - R_2.
\end{align*}
It is easy to see that all of these examples lead to strongly exceptional sequences for any choice of
$A_n$, with exception of the first one in the case where $b_2$ is a blow-up of an infinitesimal point,
where we necessarily have to choose the enumeration of the $A_i$ such that $A_n = R_1 - R_2$.
Similarly, if $X_0 \cong \mathbb{F}_a$, we can start with some toric system of the form
$P, sP + Q, P, -(a + s)P + Q$ and augment this sequence successively. In this case, we already have
less symmetries for the first augmentation and there are essentially two possibilities:
\begin{align*}
& P - R_1, R_1, sP + Q - R_1, P, -(a + s)P + Q\\
& P, sP + Q, P - R_1, R_1,  -(a + s)P + Q - R_1.
\end{align*}
We can augment these toric systems along the $b_i$ in the same fashion. Toric systems obtained
this way play a special role in what follows, so they need a name:

\begin{definition}
We call a toric system obtained by augmentation as above of a toric system on some $X_0$
a {\em standard augmentation}.
\end{definition}

It is straightforward to show that standard augmentations always lead to full exceptional sequences
of invertible sheaves. So, we recover the following well-known result (compare \cite{Orlov93}):

\begin{theorem}[\cite{HillePerling08}]
On every smooth complete rational surface exists a full exceptional sequence of invertible sheaves.
\end{theorem}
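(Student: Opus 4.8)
The plan is to prove the theorem by induction on the length $t$ of a fixed blow-down sequence $X = X_t \to \cdots \to X_0$, whose existence is guaranteed by the classification of rational surfaces. Everything then reduces to two assertions: that the standard toric systems on $X_0 \cong \mathbb{P}^2$ or $X_0 \cong \mathbb{F}_a$ correspond to full exceptional sequences, and that a single augmentation step turns a full exceptional sequence into another one. For the base case I would verify the relevant vanishings directly: on $\mathbb{P}^2$ the system $H, H, H$ yields Beilinson's sequence $\sh{O}, \sh{O}(1), \sh{O}(2)$, and on $\mathbb{F}_a$ the required $h^k$ vanish because $P$ and $Q$ generate the nef cone and so have no higher cohomology, while fullness in both cases is classical.

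For the inductive step, let $\sh{O}(E_1), \dots, \sh{O}(E_n)$ be a full exceptional sequence on $X_i$ with toric system $A_1, \dots, A_n$, let $b \colon X_{i+1} \to X_i$ be the next blow-up, and let $R$ be the total transform of its exceptional divisor $E \cong \mathbb{P}^1$. Augmentation replaces $\dots, A_k, A_{k+1}, \dots$ by $\dots, A_k - R, R, A_{k+1} - R, \dots$. First I would check that this is again a toric system. From $R^2 = -1$, $R . b^*\pic(X_i) = 0$, and $-K_{X_{i+1}} = b^*(-K_{X_i}) - R$ one obtains $(A_k - R).R = R.(A_{k+1} - R) = 1$ and $(A_k - R).(A_{k+1} - R) = 0$ at once, and the total sum drops by exactly $R$, matching $-K_{X_{i+1}}$. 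This is a routine intersection computation.

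The substantial step is to preserve the vanishing that defines an exceptional sequence, namely $h^k(E_j - E_i) = 0$ for all $k$ and all $i > j$. Each such difference is the negative of a sum of consecutive members of the toric system, and a short case analysis shows that after augmentation it takes the form $b^*\Delta$, $b^*\Delta + R$, or $-R$, where $\Delta$ is a downward difference already occurring on $X_i$. In the first case the blow-up isomorphism $H^k\big(X_{i+1}, b^*(\,\cdot\,)\big) \cong H^k(X_i, \,\cdot\,)$ together with the inductive hypothesis gives the vanishing. In the second case I would use the exact sequence
\[
0 \longrightarrow \sh{O}(b^*\Delta) \longrightarrow \sh{O}(b^*\Delta + R) \longrightarrow \sh{O}_{\mathbb{P}^1}(-1) \longrightarrow 0,
\]
whose restriction term $(b^*\Delta + R)|_E \cong \sh{O}_{\mathbb{P}^1}(R^2) = \sh{O}_{\mathbb{P}^1}(-1)$ is acyclic; the long exact cohomology sequence then transports the vanishing of $b^*\Delta$ to $b^*\Delta + R$. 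The case $-R$ is handled by $0 \to \sh{O}(-R) \to \sh{O}_{X_{i+1}} \to \sh{O}_E \to 0$, using $h^k(\sh{O}_{X_{i+1}}) = h^k(\sh{O}_{\mathbb{P}^1})$ for a rational surface and that the restriction map on global sections is an isomorphism. Carrying out this bookkeeping over every consecutive run, and confirming that $H^0$ as well as the higher groups vanish in each case, is the step I expect to demand the most care; the \emph{crucial simplification} is that the relevant restriction to the exceptional curve is always the acyclic sheaf $\sh{O}_{\mathbb{P}^1}(-1)$.

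Finally, for fullness I would invoke Orlov's semiorthogonal decomposition
\[
D^b(X_{i+1}) = \big\langle b^* D^b(X_i),\, \sh{O}_E(-1) \big\rangle,
\]
in which the blow-up of a point contributes a single extra exceptional object supported on $E$. Tracking the partial sums shows that the augmented collection contains all the pullbacks $b^*\sh{O}(E_1), \dots, b^*\sh{O}(E_n)$, which generate $b^* D^b(X_i)$, together with one further bundle $\sh{O}(b^*E_{k+1} - R)$; its cone with $b^*\sh{O}(E_{k+1})$ produces a sheaf supported on $E$ lying outside $b^* D^b(X_i)$, which up to a harmless twist by $\sh{O}(R)$ is Orlov's extra generator. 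Hence the augmented sequence generates $D^b(X_{i+1})$, closing the induction and proving the theorem.
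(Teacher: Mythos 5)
Your strategy is the paper's own: the paper proves this theorem precisely by noting that standard augmentations along a fixed blow-down sequence to $\mathbb{P}^2$ or $\mathbb{F}_a$ always yield full exceptional sequences (it calls this ``straightforward'' and defers details to \cite{HillePerling08}). Your base case, the verification that one augmentation step again produces a toric system, and the vanishing analysis are correct and complete: after a single blow-up every downward difference of the new sequence is of one of the three shapes $b^*\Delta$, $b^*\Delta + R$, $-R$ with $\Delta$ a downward difference on $X_i$, and your two short exact sequences, with the key point that the restriction to the exceptional curve is always the acyclic $\sh{O}_{\mathbb{P}^1}(-1)$, dispose of all three cases.

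There is, however, one step in the fullness argument that fails as written: the claim that the augmented collection contains all the pullbacks $b^*\sh{O}(E_1), \dots, b^*\sh{O}(E_n)$. It does not. Computing the partial sums of the augmented toric system $A_1, \dots, A_{k-1}, A_k - R, R, A_{k+1} - R, A_{k+2}, \dots, A_n$ gives the divisors
\begin{equation*}
E_1, \dots, E_k,\ E_{k+1} - R,\ E_{k+1},\ E_{k+2} - R,\ \dots,\ E_n - R,
\end{equation*}
so everything after the insertion point is twisted by $-R$. Already for one blow-up of $\mathbb{P}^2$ the sequence is $\sh{O}, \sh{O}(H - R), \sh{O}(H), \sh{O}(2H - R)$, which does not contain $\sh{O}(2H)$. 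The repair is short and uses a triangle you already have in hand: the cone of $\sh{O}(E_{k+1} - R) \hookrightarrow \sh{O}(E_{k+1})$ is $\sh{O}_E$ (the restriction is trivial because $E_{k+1}$ is a pullback, so $E_{k+1} . R = 0$), and then for each $j \geq k + 2$ the triangle built from $0 \to \sh{O}(E_j - R) \to \sh{O}(E_j) \to \sh{O}_E \to 0$ shows that $\sh{O}(E_j)$ lies in the subcategory generated, so all pullbacks are recovered after all. A second soft spot is the phrase ``up to a harmless twist by $\sh{O}(R)$'': tensoring by $\sh{O}(R)$ is an autoequivalence of $D^b(X_{i+1})$ but does \emph{not} preserve the subcategory your collection generates, so you cannot pass from $\sh{O}_E$ to Orlov's generator $\sh{O}_E(-1)$ this way. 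Instead, mutate the semiorthogonal decomposition: from $D^b(X_{i+1}) = \langle \sh{O}_E(-1),\, b^*D^b(X_i) \rangle$ one obtains, by tensoring the left factor with $\omega_{X_{i+1}}^{-1}$ (and $\omega^{-1}|_E \cong \sh{O}_E(1)$), the decomposition $D^b(X_{i+1}) = \langle b^*D^b(X_i),\, \sh{O}_E \rangle$, which is exactly adapted to the objects you produce; its semiorthogonality, $\Ext^\bullet(\sh{O}_E, b^*\sh{F}) = 0$, follows from the resolution $0 \to \sh{O}(-R) \to \sh{O} \to \sh{O}_E \to 0$ together with the isomorphisms $H^k(b^*\Delta) \cong H^k(b^*\Delta + R)$ you already established. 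With these two corrections the induction closes and the proof agrees with the intended one.
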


We have not yet determined the existence of strongly exceptional sequences. A nice class of examples
comes from simultaneous blow-ups,where we assume $X$ to be a blow-up of $X_0$ simultaneously
at several points. That is, we do not blow-up infinitesimal points. Assuming $X_0 \cong \mathbb{P}^2$,
we get:

\begin{proposition}\label{blowuponce}
The toric system
\begin{equation*}
R_t, R_{t - 1} - R_t, \dots, R_1 - R_2, H - R_1, H, H - \sum_{i = 1}^t R_i
\end{equation*}
corresponds to a strongly exceptional sequence of invertible sheaves.
\end{proposition}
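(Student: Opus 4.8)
The plan is to translate the toric system back into its sequence of line bundles and then reduce the strongly exceptional property to a finite list of cohomology vanishings. First I would form the partial sums $E_1 := 0$ and $E_{i+1} := A_1 + \dots + A_i$. Since the difference terms telescope, the sequence of divisors underlying the line bundles is
\begin{equation*}
0, \; R_t, \; R_{t-1}, \; \dots, \; R_1, \; H, \; 2H .
\end{equation*}
The given toric system is a cyclic reordering of the iterated standard augmentation of $\mathbb{P}^2$ obtained by inserting each new $R_k$ directly after the leading term, so by the results quoted above it already underlies a full exceptional sequence; in particular every backward difference $E_j - E_i$ with $i > j$ is acyclic. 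The content still to be proved is therefore the extra \emph{strong} vanishing: $h^1(D) = h^2(D) = 0$ for every forward difference $D = E_j - E_i$ with $i < j$.

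Second, I would enumerate these forward differences. Reading them off the list above, they are exactly
\begin{equation*}
R_a, \quad R_a - R_b \ (a < b), \quad H, \quad 2H, \quad H - R_a, \quad 2H - R_a ,
\end{equation*}
together with $2H - H = H$. Using $H^2 = 1$, $R_i^2 = -1$, $H . R_i = R_i . R_j = 0$ and $-K_X = 3H - \sum_i R_i$, a direct computation via Riemann--Roch gives $\chi(D) = D^2 + 2$ for every such $D$; this is the value predicted by Lemma \ref{acyclicitylemma}(i), reflecting that $\chi(-D) = 0$ along an exceptional sequence.

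Third, the vanishing of $h^2$ is uniform. By Serre duality $h^2(D) = h^0(K_X - D)$, and a one-line intersection computation shows $(K_X - D) . H < 0$ in every case. Since $H$ is the pullback of the hyperplane class it is nef, so every effective class meets it nonnegatively; hence $K_X - D$ is not effective and $h^2(D) = 0$. By Riemann--Roch it then remains to prove $h^0(D) = \chi(D) = D^2 + 2$, i.e. $h^0(R_a - R_b) = 0$, $h^0(R_a) = 1$, $h^0(H) = 3$, $h^0(2H) = 6$, $h^0(H - R_a) = 2$ and $h^0(2H - R_a) = 5$.

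This last input is the only real obstacle, and it is precisely where the hypothesis of a simultaneous, non-infinitesimal blow-up is used. For distinct centres each $R_a$ is the class of an honest $(-1)$-curve, so $h^0(R_a) = 1$, and---crucially---$R_a - R_b$ is not effective for $a \neq b$, forcing $h^0(R_a - R_b) = 0$; were $p_b$ infinitely near $p_a$, the strict transform would realise $R_a - R_b$ as an effective class and the acyclicity would fail. The remaining values are the classical dimension counts on $\mathbb{P}^2$: $h^0(mH)$ equals $h^0\big(\mathbb{P}^2, \sh{O}(m)\big)$ and pulls back isomorphically under the blow-up, and passing through a single honest point imposes exactly one independent condition on $|\sh{O}(m)|$, giving $h^0(H - R_a) = 2$ and $h^0(2H - R_a) = 5$. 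Combining the vanishing of $h^2$ with $h^0(D) = \chi(D)$ yields $h^1(D) = 0$ for every forward difference, and together with the already-established exceptional property this shows the sequence is strongly exceptional. The bookkeeping is routine; the only genuine point is keeping track of where distinctness of the centres enters.
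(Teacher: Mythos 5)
Your proof is correct, but it takes a genuinely different route from the paper's. The paper's proof starts from the criterion that a standard augmentation corresponds to a strongly exceptional sequence if and only if $h^k\big(\pm \sum_{i \in I} A_i\big) = 0$ for all $k > 0$ and all intervals $I \subset \{1, \dots, n-1\}$, reduces to the same six divisor classes $R_i$, $R_i - R_j$, $H - R_i$, $2H - R_i$, $H$, $2H$, and then disposes of $\pm(H - R_i)$ and $\pm(2H - R_i)$ qualitatively: $H$ and $2H$ are very ample, so $|H|$ and $|2H|$ are base-point free, and this yields the vanishing after twisting by $-R_i$, with no dimension counts at all. You instead split the conditions into backward differences, which you dispose of by citing that standard augmentations underlie full exceptional sequences (this replaces the ``$-$'' half of the paper's criterion), and forward differences, which you settle numerically: Riemann--Roch gives $\chi(D) = D^2 + 2$, Serre duality together with $(K_X - D).H < 0$ against the nef class $H$ kills $h^2$, and the explicit counts $h^0(R_a) = 1$, $h^0(R_a - R_b) = 0$, $h^0(H - R_a) = 2$, $h^0(2H - R_a) = 5$ force $h^0(D) = \chi(D)$ and hence $h^1(D) = 0$. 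All of these computations check out, and both arguments use the distinct-centres hypothesis at the same essential point, the non-effectivity of $R_a - R_b$; your further use of it in the ``one point imposes one condition'' counts is harmless. What the paper's base-point-freeness trick buys is brevity and uniformity: it avoids computing any $h^0$ and transfers verbatim to the $\mathbb{F}_a$ and two-step blow-up toric systems treated immediately after the proposition, whereas your argument is more elementary and self-contained. One small remark: the system as written is already literally a standard augmentation (insert each new $R_k$ at the front, the cyclic neighbour $A_n$ absorbing the terms $-R_k$), so no cyclic reordering is needed; your appeal to cyclic invariance is in any case legitimate only for the exceptional, not the strongly exceptional, property --- which is indeed all you use it for.
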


\begin{proof}
A toric system which is a standard augmentation corresponds to a strongly exceptional sequence
iff $h^k(\pm \sum_{i \in I} A_i) = 0$ for every interval $I \subset \{1, \dots, n - 1\}$. So, there are
only a few types of divisors to check, namely $R_i$, $R_i - R_j$, $H - R_i$, $2H - R_i$, $H$, $2H$.
The assertion is clear for $R_i$, $H$, $2H$. A divisor of type $R_i - R_j$ has no cohomology iff
$R_i$ does not lie over $R_j$ or vice versa; this is clear, because we assumed that no infinitesimal
points are blown up. As $H$ and $2H$ are very ample, the linear systems $| H |$ and $| 2H |$ are
base-point free and we can conclude that $h^k\big(\pm (H - R_i)\big) = h^k\big(\pm (2H - R_i)\big)
= 0$ for all $k > 0$.
\end{proof}

More explicitly, the toric system of Proposition \ref{blowuponce} corresponds to the strongly
exceptional sequence
\begin{equation*}
\sh{O}_X, \sh{O}(R_t), \dots, \sh{O}(R_1), \sh{O}(H), \sh{O}(2H).
\end{equation*}
Figure \ref{p2blowup1} shows the quiver (without relations) and the fan associated to the toric
system of Proposition \ref{blowuponce} (the relations of this quiver will be described in section
\ref{deformationsection}). The black rays denote the fan associated to the toric system $H, H, H$,
the grey rays indicate the rays added to the fan by augmenting the toric system.
\begin{figure}[htbp]
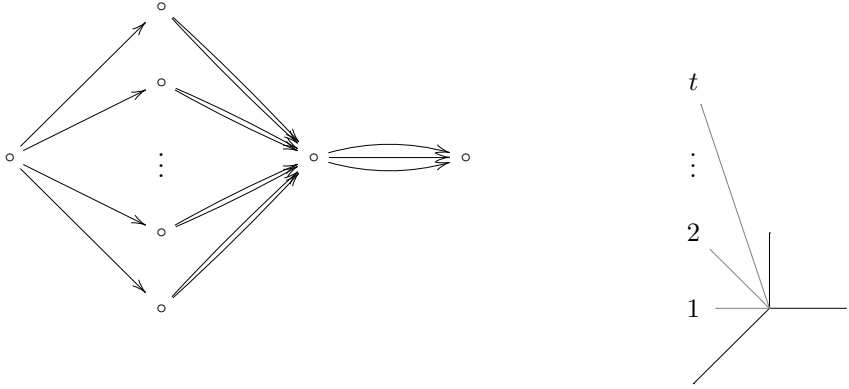

\begin{equation*}
\xygraph{!{0;<2cm,0cm>:<0cm,1cm>::}
[] *+[o]+@{o}="1"
[ruu] *+[o]+@{o}="2"
[d] *+[o]+@{o}="3"
[d] *+{\vdots}
[d] *+[o]+@{o}="4"
[d] *+[o]+@{o}="5"
[uur] *+[o]+@{o}="6"
[r] *+[o]+@{o}="7"
"1":@/-1mm/"2"
"1":@/-1mm/"3"
"1":@/-1mm/"4"
"1":@/-1mm/"5"
"2":@/^0.1em/"6"
"2":@/_0.1em/"6"
"3":@/^0.1em/"6"
"3":@/_0.1em/"6"
"4":@/^0.1em/"6"
"4":@/_0.1em/"6"
"5":@/^0.1em/"6"
"5":@/_0.1em/"6"
"6":@/^0.5em/"7"
"6":@/_0.5em/"7"
"6":@/-1mm/"7"
[rrdd]="8"
[l(0.5)d]="9"
[r(0.5)uu]="10"
[r(0.5)d]="11"
[l] *+[o]+{1}="12"
[u] *+[o]+{2}="13"
[u] *+{\vdots}
[u] *+[o]+{t}="14"
"8"-"9"
"8"-"10"
"8"-"11"
"8"-@[grey]"12"
"8"-@[grey]"13"
"8"-@[grey]"14"
}
\end{equation*}
\caption{Quiver and fan associated to a toric system coming from a simultaneous blow-up of $\mathbb{P}^2$.}
\label{p2blowup1}
\end{figure}
Similarly, one can produce a strongly exceptional sequence if $X$ can be obtained from $\mathbb{P}^2$ by
blowing up two-times in several points. That is, we can partition the blow-ups such that $b_1, \dots, b_r$
blow up points on $\mathbb{P}^2$ and $b_{r + 1}, \dots, b_t$ blow up infinitesimal points on the exceptional
divisors of the first group of blow-ups. A toric system which corresponds to a strongly exceptional sequence
then is given by
\begin{equation*}
R_r, R_{r - 1} - R_r, \dots, R_1 - R_2, H - R_1, H - R_{r + 1}, R_{r + 1} - R_{r + 2}, \dots, R_{t - 1} - R_t, R_t, H - \sum_{i = 1}^t R_i.
\end{equation*}
This can be shown similarly as in Proposition \ref{blowuponce}.
Figure \ref{p2blowup2} shows the corresponding quiver and fan.
\begin{figure}[htbp]
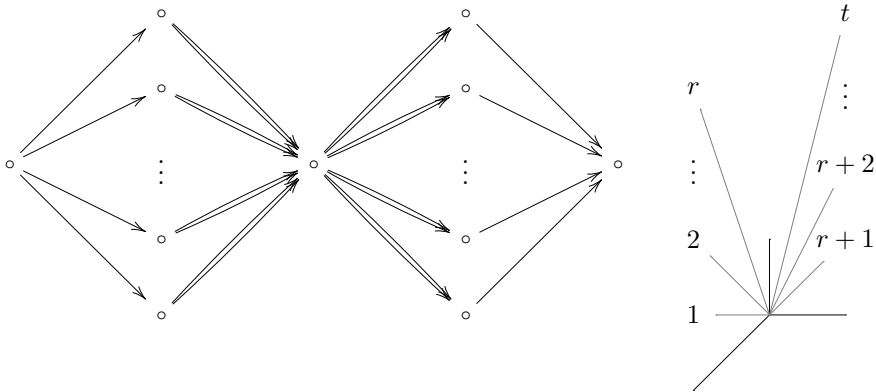

\begin{equation*}
\xygraph{!{0;<2cm,0cm>:<0cm,1cm>::}
[] *+[o]+@{o}="1"
[ruu] *+[o]+@{o}="2"
[d] *+[o]+@{o}="3"
[d] *+{\vdots}
[d] *+[o]+@{o}="4"
[d] *+[o]+@{o}="5"
[uur] *+[o]+@{o}="6"
[ruu] *+[o]+@{o}="7"
[d] *+[o]+@{o}="8"
[d] *+{\vdots}
[d] *+[o]+@{o}="9"
[d] *+[o]+@{o}="10"
[ruu] *+[o]+@{o}="11"
"1":@/-1mm/"2"
"1":@/-1mm/"3"
"1":@/-1mm/"4"
"1":@/-1mm/"5"
"2":@/^0.1em/"6"
"2":@/_0.1em/"6"
"3":@/^0.1em/"6"
"3":@/_0.1em/"6"
"4":@/^0.1em/"6"
"4":@/_0.1em/"6"
"5":@/^0.1em/"6"
"5":@/_0.1em/"6"
"7":@/-1mm/"11"
"8":@/-1mm/"11"
"9":@/-1mm/"11"
"10":@/-1mm/"11"
"6":@/^0.1em/"7"
"6":@/_0.1em/"7"
"6":@/^0.1em/"8"
"6":@/_0.1em/"8"
"6":@/^0.1em/"9"
"6":@/_0.1em/"9"
"6":@/^0.1em/"10"
"6":@/_0.1em/"10"
[rr]="12"
[l(0.5)d]="13"
[r(0.5)uu]="14"
[r(0.5)d]="15"
[u] *+[o]+{r + 1}="16"
[u] *+[o]+{r + 2}="17"
[u] *+{\vdots}
[u] *+[o]+{t}="18"
[ldddd] *+[o]+{1}="19"
[u] *+[o]+{2}="20"
[u] *+{\vdots}
[u] *+[o]+{r}="21"
"12"-"13"
"12"-"14"
"12"-"15"
"12"-@[grey]"16"
"12"-@[grey]"17"
"12"-@[grey]"18"
"12"-@[grey]"19"
"12"-@[grey]"20"
"12"-@[grey]"21"
}
\end{equation*}
\caption{Quiver and fan associated to a toric system coming from two times blowing up $\mathbb{P}^2$.}
\label{p2blowup2}
\end{figure}
The quiver arises from
the quiver of figure \ref{p2quiver} by inserting $r$ additional vertices, corresponding to the
divisors $R_1, \dots, R_t$, between the first and second vertex, and $t - r$ vertices between
the second and third.
The strongly exceptional sequence associated to this toric system is
\begin{equation*}
\sh{O}_X,  \sh{O}(R_r), \dots, \sh{O}(R_1), \sh{O}(H), \sh{O}(2H - R_{r + 1}), \dots, \sh{O}(2H - R_t).
\end{equation*}

Similarly, we can write down a strongly exceptional toric system for the case where $X$ comes from blowing
up a Hirzebruch surface two times:
\begin{equation*}
R_r, R_{r - 1} - R_r, \dots, R_1 - R_2, P - R_1, sP + Q, P - R_{r + 1}, R_{r + 1} - R_{r + 2}, \dots, R_{t - 1} - R_t, R_t, -(a + s)P + Q - \sum_{i = 1}^t R_i
\end{equation*}
with strongly exceptional sequence
\begin{equation*}
\sh{O}_X,  \sh{O}(R_r), \dots, \sh{O}(R_1), \sh{O}(P), \sh{O}((s + 1)P + Q), \sh{O}(P), \sh{O}((s + 2)P + Q- R_{r + 1}), \dots, \sh{O}((s + 2)P + Q - R_t).
\end{equation*}
Figure \ref{fablowup} shows the corresponding quiver and fan.
\begin{figure}[htbp]
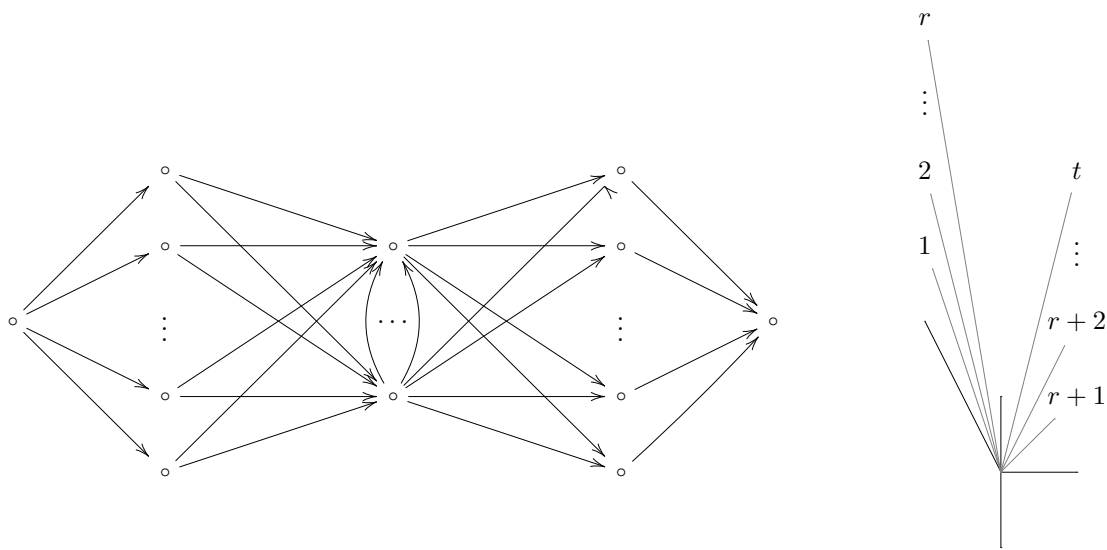

\begin{equation*}
\xygraph{!{0;<2cm,0cm>:<0cm,1cm>::}
[] *+[o]+@{o}="1"
[ruu] *+[o]+@{o}="2"
[d] *+[o]+@{o}="3"
[d] *+{\vdots}
[d] *+[o]+@{o}="4"
[d] *+[o]+@{o}="5"
[r(1.5)u] *+[o]+@{o}="6"
[u] *+{\dots}
[u] *+[o]+@{o}="6b"
[r(1.5)u] *+[o]+@{o}="7"
[d] *+[o]+@{o}="8"
[d] *+{\vdots}
[d] *+[o]+@{o}="9"
[d] *+[o]+@{o}="10"
[ruu] *+[o]+@{o}="11"
"1":@/-1mm/"2"
"1":@/-1mm/"3"
"1":@/-1mm/"4"
"1":@/-1mm/"5"
"2":@/-0.1em/"6"
"3":@/-0.1em/"6"
"4":@/-0.1em/"6"
"5":@/-0.1em/"6"
"2":@/-0.1em/"6b"
"3":@/-0.1em/"6b"
"4":@/-0.1em/"6b"
"5":@/-0.1em/"6b"
"6":@/^1em/"6b"
"6":@/_1em/"6b"
"6":@/-1mm/"7"
"6":@/-1mm/"8"
"6":@/-1mm/"9"
"6":@/-1mm/"10"
"6b":@/-1mm/"7"
"6b":@/-1mm/"8"
"6b":@/-1mm/"9"
"6b":@/-1mm/"10"
"7":@/-0.1em/"11"
"8":@/-0.1em/"11"
"9":@/-0.1em/"11"
"10":@/_0.1em/"11"
[r(1.5)dd]="12"
[u]="15"
[dd]="13b"
[l(0.5)uuu]="13"
[u] *+[o]+{1}="19"
[u] *+[o]+{2}="20"
[u] *+{\vdots}
[u] *+[o]+{r}="21"
[rdddddd]="14"
[u] *+[o]+{r + 1}="16"
[u] *+[o]+{r + 2}="17"
[u] *+{\vdots}
[u] *+[o]+{t}="18"
"12"-"13"
"12"-"13b"
"12"-"14"
"12"-"15"
"12"-@[grey]"16"
"12"-@[grey]"17"
"12"-@[grey]"18"
"12"-@[grey]"19"
"12"-@[grey]"20"
"12"-@[grey]"21"
}
\end{equation*}
\caption{Quiver and fan associated to a toric system coming from two times blowing up $\mathbb{F}_a$.}
\label{fablowup}
\end{figure}
The quiver arises from the
quiver shown in figure \ref{fa2} by inserting $r$ additional vertices corresponding to the
divisors $R_1, \dots, R_r$ between the first and second vertex, and $t - r$ vertices
corresponding to $R_{r + 1}, \dots, R_t$ between the third and fourth vertex. Note that
to maintain the additivity property of the Euler characteristic (Lemma \ref{acyclicitylemma})
we also have to insert
additional arrows from the first layer of new vertices to the former third vertex and
from the former second vertex to the second layer of new vertices.
We conclude:

\begin{theorem}[\cite{HillePerling08}]
Any smooth complete rational surface which can be obtained by blowing up 
a Hirzebruch surface two times (in possibly several points in each step) has a full strongly exceptional
sequence of invertible sheaves.
\end{theorem}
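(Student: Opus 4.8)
The plan is to prove the theorem constructively, by exhibiting for each such $X$ an explicit toric system of the kind displayed just before the statement and then verifying that it corresponds to a \emph{strongly} exceptional sequence. First I would fix a factorization of the blow-up in the prescribed two-step form: a first batch $b_1, \dots, b_r$ blowing up distinct points of $X_0 \cong \mathbb{F}_a$, followed by a second batch $b_{r+1}, \dots, b_t$ blowing up infinitesimal points lying on the exceptional divisors created by the first batch. Using the associated basis $P, Q, R_1, \dots, R_t$ of $\pic(X)$ with the intersection relations recorded above, I would take the toric system
\[
R_r,\, R_{r-1}-R_r,\, \dots,\, R_1-R_2,\, P-R_1,\, sP+Q,\, P-R_{r+1},\, R_{r+1}-R_{r+2},\, \dots,\, R_{t-1}-R_t,\, R_t,\, -(a+s)P+Q-\sum_{i=1}^t R_i.
\]

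Next I would check that this really is a toric system, i.e. that it satisfies the three defining conditions (\ref{toricsystemdefi})--(\ref{toricsystemdefiii}); this is a direct computation with the intersection numbers, and the shape of the system exhibits it as a standard augmentation of the standard toric system $P, sP+Q, P, -(a+s)P+Q$ on $\mathbb{F}_a$. Since standard augmentations always yield full exceptional sequences, this already settles fullness and the exceptional property, so that only the strong exceptionality --- the vanishing of \emph{all} higher cohomologies of the relevant differences --- remains to be shown.

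For that I would invoke the criterion used in the proof of Proposition \ref{blowuponce}: a standard augmentation corresponds to a strongly exceptional sequence if and only if $h^k\big(\pm\sum_{i\in I}A_i\big)=0$ for every $k>0$ and every interval $I\subset\{1,\dots,n-1\}$. The remaining work is then to enumerate the divisor classes arising as such interval sums and to verify the vanishing case by case, exactly as in Proposition \ref{blowuponce}. Here the two-step hypothesis does the essential work: because no point is blown up to infinitesimal depth greater than two, the partial sums collapse to a short list of classes, namely the purely exceptional ones $R_i$ and $R_i-R_j$, and the classes built from $P$ and $Q$ twisted by a few of the $R_i$ (such as $P-R_i$, $sP+Q$, and $-(a+s)P+Q-\sum_i R_i$). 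For the purely exceptional classes one argues as before: $R_i-R_j$ is acyclic unless $R_i$ lies over $R_j$, and the two-step structure pins down precisely when this can occur. For the mixed classes I would use that $P$ and $Q$ generate the nef cone of $\mathbb{F}_a$ and that the pertinent linear systems are base-point free, so that subtracting the exceptional classes of the blown-up points cannot create higher cohomology.

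I expect the main obstacle to be the bookkeeping in this last step: correctly tracking which interval sums actually occur, and in particular controlling the mixed classes that involve both the ruling generators $P, Q$ and several $R_i$ at once, where higher cohomology must be excluded by a base-point-freeness argument rather than by a direct Kawamata--Viehweg-type vanishing statement. Since, by the classification of rational surfaces, every surface obtainable from $\mathbb{F}_a$ by two successive (possibly multiple) blow-ups arises with a factorization of exactly the above form, producing and verifying this single family of toric systems completes the proof; the strictly analogous construction starting from $\mathbb{P}^2$ is available as well, and handled identically.
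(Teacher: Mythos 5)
Your proposal is correct and takes essentially the same route as the paper, which proves this theorem precisely by exhibiting the displayed toric system as a standard augmentation (giving fullness and exceptionality) and then verifying strong exceptionality through the interval criterion $h^k\bigl(\pm\sum_{i\in I}A_i\bigr)=0$, $I\subset\{1,\dots,n-1\}$, case by case as in Proposition \ref{blowuponce}. One small caution: your illustrative list of classes to check includes $A_n=-(a+s)P+Q-\sum_{i=1}^t R_i$, which is \emph{not} an interval sum (the criterion deliberately excludes index $n$) and whose higher cohomology in general does not vanish --- it is exactly the slot into which the construction dumps the cohomological ``dirt'' --- so it must be left out of the verification.
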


Note that there is no loss of generality, as $\mathbb{P}^2$ blown up in one point is isomorphic to
$\mathbb{F}_1$.
To create above strongly exceptional toric systems, we have made use of the ``slots'' in the toric systems
on $X_0$. So, as there are at most four such slots (for $X_0 \cong \mathbb{F}_a$), it is conceivable
that we can expect by using standard augmentations only to get sequences if $X$ comes from blowing
up $X_0$ simultaneously at most four times. But note that we have used $A_n$ in above construction
to collect all the cohomological ``dirt'' coming from the augmentations. This does not leave too much degree
of freedom for further augmentations. Indeed, it turns out that above type of toric system is maximal:

\begin{theorem}[\cite{HillePerling08}]
Let $\mathbb{P}^2 \neq X$ be a smooth complete rational surface which admits a full strongly
exceptional sequence whose associated toric system is a standard augmentation. Then $X$ can
be obtained by blowing up a Hirzebruch
surface two times (in possibly several points in each step).
\end{theorem}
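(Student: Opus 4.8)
The plan is to reduce everything to the cohomological criterion isolated in the proof of Proposition~\ref{blowuponce}: a standard augmentation $A_1, \dots, A_n$ is strongly exceptional precisely when $h^k\big(\pm\sum_{i \in I} A_i\big) = 0$ for all $k > 0$ and every interval $I \subseteq \{1, \dots, n - 1\}$. In terms of the partial sums $E_1 = 0$, $E_i = \sum_{l < i} A_l$ this says that $h^k(E_i - E_j) = 0$ for all $k > 0$ and all $i \neq j$ in $\{1, \dots, n\}$, so that the single entry $A_n$ is the only slot permitted to carry higher cohomology. First I would set up the cohomology dictionary for the classes that actually occur, extending Proposition~\ref{blowuponce}: the positive part (multiples of $H$, resp.\ of $P$ and $Q$) contributes no higher cohomology, since $|H|, |2H|$, resp.\ $|P|, |Q|$, are base-point free and pull back under blow-ups; and, writing $\chi(D) = h^0(D) - h^1(D) + h^2(D)$, any \emph{effective} class with $\chi \leq 0$ has $h^1 \neq 0$. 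In particular $h^k\big(\pm(R_i - R_j)\big) = 0$ holds exactly when neither of the points $p_i, p_j$ lies infinitesimally over the other.

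I would then encode the geometry of $X$ by its infinitesimal forest $F$ on $\{1, \dots, t\}$, in which $j$ is a descendant of $i$ whenever $p_j$ lies infinitesimally over $p_i$. The assertion that $X$ is obtained by blowing up a Hirzebruch surface two times is equivalent to $F$ having depth at most two together with the base being Hirzebruch; and since $X \neq \mathbb{P}^2$, the base may always be arranged to be Hirzebruch (blow down to $X_0$ and, if $X_0 \cong \mathbb{P}^2$, re-expand the first blow-up as $\mathbb{F}_1$). The theorem thus reduces to the implication that a strongly exceptional standard augmentation forces $\operatorname{depth}(F) \leq 2$, which I would prove by contraposition. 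Assume $F$ contains a chain $p_a \prec p_b \prec p_c$. Then, by the dictionary above, $R_a - R_b$, $R_b - R_c$ and $R_a - R_c$ are all effective (the first two because $p_b$ resp.\ $p_c$ lies infinitesimally over $p_a$ resp.\ $p_b$, the third as their sum), and each has $\chi = 0$; being effective with $\chi = 0$, each has $h^1 \neq 0$. It therefore suffices to show that at least one of these three bad classes is forced to occur as an in-range interval sum $E_i - E_j$, contradicting strong exceptionality.

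To analyse this I would track, for each $k$, the three entries through which $R_k$ enters: inserting $R_k$ creates one entry with $R_k$-coefficient $+1$ and subtracts $R_k$ from its two neighbours, and these coefficients are frozen under all subsequent insertions. Consequently the coefficient $c_k(i)$ of $R_k$ in $E_i$ takes only the values $0$ and $-1$ and flips at prescribed positions determined by the cyclic places of those three entries. The condition that $R_a - R_b$ occur as some $E_i - E_j$ then becomes a purely combinatorial condition on the relative cyclic order of the insertion data of $R_a$ and $R_b$, coupled with the requirement that all other exceptional coefficients and the positive part cancel between $E_i$ and $E_j$. The heart of the matter, and the step I expect to be the main obstacle, is to show that the three nested conditions --- simultaneously avoiding $R_a - R_b$, $R_b - R_c$ and $R_a - R_c$ as in-range interval sums --- are incompatible: the three differences are linked by $R_a - R_c = (R_a - R_b) + (R_b - R_c)$, and the complement of the distinguished position $n$ is a single arc on which the three members of a totally ordered chain cannot all be mutually screened, so one bad difference is realised internally. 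The genuine difficulty lies in controlling the cancellation of the remaining coefficients while carrying out this bookkeeping; I would expect either a direct case analysis on the six insertion positions of $R_a, R_b, R_c$, or a preliminary classification of the possible shapes of a strongly exceptional standard augmentation, to settle it.

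Once $\operatorname{depth}(F) \leq 2$ is established the conclusion is immediate: the second-level exceptional curves are pairwise disjoint $(-1)$-curves, hence can be contracted simultaneously, and contracting the first-level curves afterwards exhibits $X$ as a two-step blow-up of a Hirzebruch surface.
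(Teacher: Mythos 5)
Your reduction is sound as far as it goes: the criterion that a toric system is strongly exceptional iff all differences $E_i - E_j$ of partial sums are acyclic in both directions (the interval-sum criterion recorded in the proof of Proposition~\ref{blowuponce}), the dictionary ``effective with $\chi \leq 0$ forces $h^1 \neq 0$'', the characterization of $h^k\big(\pm(R_i - R_j)\big) = 0$ via infinitesimal relations, and the re-basing $\mathbb{P}^2 \rightsquigarrow \mathbb{F}_1$ are all correct. But the proof has a genuine gap at exactly the step you yourself flag as the main obstacle, and the gap is worse than a missing case analysis: the pivotal claim --- that a chain $p_a \prec p_b \prec p_c$ forces one of $\pm(R_a - R_b)$, $\pm(R_b - R_c)$, $\pm(R_a - R_c)$ to occur as a difference of partial sums --- is not only unproven but doubtful as stated. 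A difference of two partial sums equals $\pm(R_u - R_v)$ essentially only when the insertions of $R_u$ and $R_v$ are made at the same slot; if $R_a$, $R_b$, $R_c$ are inserted at three different slots of the base system, none of your three classes appears among the $E_i - E_j$, and what must be excluded instead are \emph{mixed} classes of the form $N + (R_a - R_b)$ with $N$ a difference of base classes. For instance, on a blow-up of $\mathbb{P}^1 \times \mathbb{P}^1$ with $p_b$ infinitesimally over $p_a$, the class $D = P + R_a - R_b$ satisfies $D \cdot (R_a - R_b) = -2 < 0$, so $h^0(D) = h^0(P) = 2$ while $\chi(D) = 1$, whence $h^1(D) = 1$: the obstruction migrates into classes entirely outside your list of bad divisors. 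Ruling out all such mixed classes over all insertion patterns is where the real content of the theorem lies; the survey itself gives no proof (it cites [HP08] and offers only the heuristic that $A_n$ has already been spent collecting cohomological ``dirt''), so nothing in the stated dictionary covers this.

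Your combinatorial bookkeeping is also incorrect in a way that undermines the planned case analysis. Inserting $R_k$ between $A_{j-1}$ and $A_j$ does not merely add one partial sum with frozen coefficients: the new list of partial sums is $E_1, \dots, E_{j-1}, E_j - R_k, E_j, E_{j+1} - R_k, \dots, E_m - R_k$, i.e.\ a whole suffix of the \emph{previously existing} partial sums is shifted by $-R_k$, so the coefficients of old elements are not frozen under later insertions (only differences of pairs on the same side of the cut are preserved; differences across the cut change by $R_k$). Moreover an insertion at the seam, between $A_n$ and $A_1$, produces relative to the normalization $E_1 = 0$ a last partial sum of the form $-K_{X_{k-1}} - 2R_k$, so $c_k(i) \in \{0, -1\}$ fails without precisely the cyclic renormalization the survey alludes to (``a certain normalization process''). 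Note also that differences of partial sums never involve $A_n$ at all, since $E_i - E_j = \sum_{l = j}^{i - 1} A_l$ for $i > j$; the exemption at position $n$ acts through the \emph{choice of cyclic cut}, which re-twists the whole sequence rather than merely screening one difference, so the ``single arc on which a chain cannot be mutually screened'' picture has no sound foundation as set up. The skeleton (contraposition on a depth-three chain against the interval-sum criterion) is a reasonable plan, but with the central claim unestablished and likely needing replacement by an analysis of the mixed classes, this is an outline of a strategy rather than a proof.
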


So far it is not clear, whether every strongly exceptional toric system is related to a standard augmentation.
We will see in the next section that this at least is true for toric surfaces.
Conjecturally, this is true for any rational surface.

\section{A toric example and counterexample}

As mentioned before, on toric surfaces it suffices just to consider standard augmentations:

\begin{theorem}[\cite{HillePerling08}]
Let $X$ be a smooth complete toric surface, then every full strongly exceptional sequence of invertible
sheaves comes from a toric system which is a standard augmentation.
\end{theorem}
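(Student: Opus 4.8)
The plan is to argue by induction on $n$, peeling off one blow-up at a time by a \emph{de-augmentation} that inverts the augmentation procedure of the previous section. For $n = 3$ and $n = 4$ the toric surface $X$ is $\mathbb{P}^2$ or some $\mathbb{F}_a$, and one checks directly—using only $\pic(X) \cong \Z$ or $\Z^2$, the defining relations of a toric system, and the vanishing of all cohomology of the $\pm A_i$—that the only full strongly exceptional toric systems are the standard ones $H, H, H$ and $P, sP + Q, P, -(a + s)P + Q$. For the inductive step ($n \geq 5$) I first locate a distinguished element: by Theorem \ref{structuretheorem} the integers $A_i^2 = \chi(A_i) - 2$ are the self-intersections of the torus-invariant divisors $D_1, \dots, D_n$ of a smooth complete toric surface $Y$. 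Since $n \geq 5$, this $Y$ is non-minimal and therefore carries a torus-invariant $(-1)$-curve, so some $A_k^2 = -1$.

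Next I upgrade $A_k$ to an honest contractible curve \emph{on $X$}. By Lemma \ref{acyclicitylemma} and strong exceptionality, $\chi(-A_k) = 0$ and all higher cohomology of $\pm A_k$ vanishes, whence $h^0(A_k) = \chi(A_k) = A_k^2 + 2 = 1$ and $h^0(-A_k) = 0$; moreover $-K_X . A_k = \chi(A_k) - \chi(-A_k) = 1$, so adjunction gives arithmetic genus $0$. Thus the unique effective divisor in $|A_k|$ is the class of a $(-1)$-curve. Here the toric hypothesis enters decisively: choosing a torus-equivariant structure on $\sh{O}(A_k)$, the space $H^0\big(X, \sh{O}(A_k)\big)$ decomposes into character eigenlines, and being one-dimensional it is a single eigenline, so its unique effective member is torus-invariant. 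Contracting it yields a blow-down $b : X \to X'$ onto a smooth complete \emph{toric} surface. On toric systems this contraction is exactly the de-augmentation deleting $A_k$ and replacing its two cyclic neighbours by $A_{k-1} + A_k$ and $A_{k+1} + A_k$; a direct check of intersection numbers shows the result is again a toric system, living in $A_k^{\perp} = \pic(X') \subset \pic(X)$ and summing to $-K_{X'} = -K_X + A_k$.

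It remains to show the de-augmented system is again full strongly exceptional, after which induction identifies it with a standard augmentation on $X'$ and hence the original with a standard augmentation on $X$. Since $b^{*}$ preserves cohomology, every partial sum of the new system pulls back to a divisor on $X$: one meeting at most one merged neighbour pulls back to an honest partial sum of the old system, whose higher cohomology vanishes by hypothesis, while one meeting both merged neighbours has the form $\sum_{i \in I} A_i + 2A_k = D + C$ with $D$ an old partial sum and $C = A_k$ the $(-1)$-curve, where $D . C = 1$; feeding the sequence $0 \to \sh{O}(D) \to \sh{O}(D + C) \to \sh{O}_C(D + C) \to 0$ (and its twist by $-(D+C)$, using $\sh{O}_C(D) = \sh{O}_{\mathbb{P}^1}(1)$) propagates the vanishing. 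I expect the main obstacle to lie precisely in organizing these vanishings uniformly. Unlike the purely exceptional case, a strongly exceptional system has no cyclic symmetry, so the distinguished slot $A_n$ that collects the cohomological ``dirt'' must be handled separately, and controlling the merged sums that interact with $A_n$ is exactly the point where one must invoke the combinatorial cohomology-vanishing peculiar to toric surfaces rather than formal sheaf manipulation—which is also why the statement drops to a conjecture once $X$ is an arbitrary rational surface.
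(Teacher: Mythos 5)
First, a caveat on the comparison itself: this survey does not actually prove the statement --- it is quoted from \cite{HillePerling08}, and Remark \ref{lazyremark} explicitly says the required cohomology-vanishing machinery is beyond the scope of the note. So your proposal can only be measured against the strategy of \cite{HillePerling08}, whose overall frame (induction on the Picard rank, peeling off blow-ups by de-augmentation, with $\mathbb{P}^2$ and $\mathbb{F}_a$ as base cases) your sketch does capture. But two steps in your argument are genuinely broken or missing, and they are precisely where the real work lies.

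The first gap is your passage from $A_k^2 = -1$ to an irreducible $(-1)$-curve on $X$. Theorem \ref{structuretheorem} produces a $(-1)$-ray on the \emph{auxiliary} toric surface $Y$, not a curve on $X$; your equivariant eigenline argument does show the unique effective member of $|A_k|$ is torus-invariant, but invariant effective divisors may be reducible, and nothing in your numerics rules this out. Concretely, if $C_1, C_2$ are adjacent invariant curves on $X$ with $C_1^2 = -1$, $C_2^2 = -2$, $C_1 . C_2 = 1$, then $D = C_1 + C_2$ satisfies $D^2 = -1$, $-K_X . D = 1$, $\chi(-D) = 0$ and typically $h^0(D) = 1$, so every invariant you compute (including Lemma \ref{acyclicitylemma}) is blind to reducibility; and such composite classes do occur as entries of genuine toric systems. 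Without irreducibility there is no contraction $b : X \rightarrow X'$ and your induction never starts. The second gap is the $A_n$ problem, which you name but do not solve: strong exceptionality gives $h^{>0}\big(\pm \sum_{i \in I} A_i\big) = 0$ only for intervals $I \subseteq \{1, \dots, n-1\}$, and the paper stresses that $h^{>0}(A_n)$ need not vanish. Hence the existence of a de-augmentable slot $k$ with full vanishing for $\pm A_k$, and the vanishing for your merged sums that cross slot $n$, are exactly the cases your short-exact-sequence bookkeeping cannot reach --- and deferring them to ``combinatorial cohomology-vanishing peculiar to toric surfaces'' outsources the actual content of the theorem, since in \cite{HillePerling08} that analysis (together with the normalization process hidden in ``comes from'', which your inductive step also needs in order to re-insert $A_k$ into a cyclically renumbered standard augmentation on $X'$, and the descent of fullness under blow-down) \emph{is} the proof rather than a footnote to it.
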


Note that we suppress here some technical details which are hidden in the formulation ``comes from''.
For a given toric system we may need to perform a certain normalization process. This process is
trivial but it may be one reason why the general problem of constructing exceptional sequences
of invertible sheaves was perceived as somewhat unwieldy.
An immediate consequence is the following:

\begin{theorem}[\cite{HillePerling08}]
Let $\mathbb{P}^2 \neq X$ be a smooth complete toric surface. Then there exists a full strongly
exceptional sequence of invertible sheaves on $X$ if and only if $X$ can be obtained from a Hirzebruch
surface in at most two steps by blowing up torus fixed points.
\end{theorem}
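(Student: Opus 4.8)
The plan is to obtain the equivalence as a formal consequence of the three theorems already stated, so that the only real work lies in the torus-equivariant refinement of the blow-up structure. I would prove the two implications separately.

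For the direction ``sequence $\Rightarrow$ structure'', I would start from a full strongly exceptional sequence of invertible sheaves on $X$ and feed it into the theorem that, on a smooth complete toric surface, every such sequence comes from a toric system which is a standard augmentation. This produces a full strongly exceptional sequence on $X$ whose associated toric system is a standard augmentation, and since by hypothesis $X \neq \mathbb{P}^2$, the maximality theorem applies and shows that $X$ arises from some Hirzebruch surface $\mathbb{F}_a$ by blowing up two times. The remaining task is to arrange these blow-ups at torus fixed points: here I would use that the blow-down sequence $X = X_t \to \cdots \to X_0$ underlying a standard augmentation on a toric surface is itself toric, so that each contraction removes a single ray from the fan and is the blow-down of a torus-invariant $(-1)$-curve to a torus fixed point; equivalently, the corresponding blow-up is centered at a torus fixed point. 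Partitioning the contractions according to the two-step structure delivered by the maximality theorem then gives exactly ``at most two steps by blowing up torus fixed points''.

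For the converse, I would simply observe that a surface obtained from $\mathbb{F}_a$ in at most two steps by blowing up torus fixed points is, forgetting the torus action, a surface obtained by blowing up $\mathbb{F}_a$ two times (in possibly several points in each step), allowing either group of centers to be empty. The existence theorem then supplies a full strongly exceptional sequence; concretely one may take the standard-augmentation toric systems written down above for two-fold blow-ups of $\mathbb{F}_a$, whose centers are torus fixed points precisely because $X$ is toric. The degenerate base case $X \cong \mathbb{F}_a$ (zero steps) is already covered by the strongly exceptional systems $P, sP + Q, P, -(a + s)P + Q$ with $s \geq -1$, and the excluded case $X \cong \mathbb{P}^2$ is exactly the surface for which the equivalence fails, since $\mathbb{P}^2$ carries the sequence $\sh{O}, \sh{O}(1), \sh{O}(2)$ yet cannot be obtained from any Hirzebruch surface by blowing up.

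I expect the only genuine obstacle to be the toric refinement in the forward direction: the maximality theorem is stated for arbitrary rational surfaces and yields only an abstract two-step blow-up presentation, so the crux is to verify that for a toric $X$ this presentation can be realized torus-equivariantly. The key point I would establish is that the normalization hidden in ``comes from a standard augmentation'' can be performed compatibly with the fan of $X$, after which every contraction in the blow-down sequence is automatically a toric blow-down centered at a torus fixed point and the two-step grouping transports verbatim to the toric setting.
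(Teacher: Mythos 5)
Your proposal is correct and follows essentially the paper's own route: the paper presents this theorem as an immediate consequence of the preceding theorem (that on a smooth complete toric surface every full strongly exceptional sequence of invertible sheaves comes from a standard augmentation), combined with the maximality theorem for the forward direction and the existence theorem for the converse, exactly as you arrange it. The torus-equivariance refinement you isolate as the crux is indeed the only point beyond formal combination, and it is automatic: every $(-1)$-curve on a toric surface is rigid, hence the unique effective divisor in its class and therefore torus-invariant, so each contraction in the two-step presentation is a toric blow-down with center a torus fixed point.
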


We can immediately give a bound for toric surfaces which admit a full strongly exceptional 
sequence of invertible sheaves:

\begin{corollary}
Let $X$ be a smooth complete toric surface which admits a full strongly exceptional sequence
of invertible sheaves. Then $\rk \pic(X) \leq 14$.
\end{corollary}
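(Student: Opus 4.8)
The plan is to combine the preceding theorem with a purely combinatorial count of how many torus fixed points can be blown up in two rounds. If $X \cong \mathbb{P}^2$ then $\rk \pic(X) = 1$ and there is nothing to prove, so by the preceding theorem we may assume that $X$ is obtained from some Hirzebruch surface $\mathbb{F}_a$ in at most two steps by blowing up torus fixed points. Since $\rk \pic$ of a smooth complete toric surface equals the number of rays of its fan minus two, and each blow-up of a torus fixed point is a star subdivision adding exactly one ray, each such blow-up raises $\rk \pic$ by exactly one. Thus if $t$ denotes the total number of blow-ups, $\rk \pic(X) = \rk \pic(\mathbb{F}_a) + t = 2 + t$, and it suffices to show $t \le 12$.

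First I would record the two facts needed: a smooth complete toric surface with $N$ rays has exactly $N$ two-dimensional cones, hence exactly $N$ torus fixed points; and $\mathbb{F}_a$ has $N = 4$ for every $a$. Then I would bound the two rounds separately. In the first step we may blow up only distinct torus fixed points of $\mathbb{F}_a$, of which there are four, so the number $k_1$ of first-step blow-ups satisfies $k_1 \le 4$. Each such blow-up replaces one cone by two, increasing the number of fixed points by one, so the intermediate surface $X_1$ has exactly $4 + k_1$ torus fixed points.

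In the second step we may again only blow up distinct torus fixed points, now of $X_1$, so the number $k_2$ of second-step blow-ups satisfies $k_2 \le 4 + k_1$. Hence $t = k_1 + k_2 \le k_1 + (4 + k_1) = 4 + 2k_1 \le 4 + 8 = 12$, and therefore $\rk \pic(X) = 2 + t \le 14$.

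The argument is essentially bookkeeping, so there is no deep obstacle; the one point that needs care is the invariance $\#\{\text{fixed points}\} = \#\{\text{rays}\}$ under blow-up, together with the fact that ``at most two steps'' genuinely caps the second round at the fixed points produced by the first, so that one cannot iterate indefinitely. The bound is sharp: blowing up all four fixed points of $\mathbb{F}_a$, and then all eight fixed points of the resulting surface, realizes $t = 12$ and $\rk \pic(X) = 14$.
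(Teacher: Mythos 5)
Your proof is correct and takes essentially the same route as the paper's: bound the first round by the four torus fixed points of $\mathbb{F}_a$, bound the second round by the fixed points of the intermediate surface (at most eight), and add $\rk \pic(\mathbb{F}_a) = 2$ to the total of at most twelve blow-ups. Your write-up merely makes explicit the bookkeeping ($k_2 \leq 4 + k_1$, maximized at $k_1 = 4$) that the paper compresses into the extremal case, plus a sharpness remark justified by the ``if and only if'' in the preceding theorem.
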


\begin{proof}
We have $\rk \pic(\mathbb{F}_a) = 2$; moreover, $\mathbb{F}_a$, as a toric variety,  has four torus
fixed points. Its blow-up in these four points has 8 fixed points. Hence we can blow up at most
twelve points and the bound follows.
\end{proof}

On the other hand, a Picard number smaller or equal $14$ does not imply that given toric surface
admits such a sequence. Indeed, there is only the trivial bound:

\begin{corollary}
Let $X$ be a smooth complete toric surface with $\rk \pic(X) \leq 4$. Then $X$ admits a strongly
exceptional sequence of invertible sheaves.
\end{corollary}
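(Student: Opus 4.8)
The plan is to deduce the statement from the preceding classification theorem, which characterizes the smooth complete toric surfaces $X \neq \mathbb{P}^2$ carrying a full strongly exceptional sequence of invertible sheaves as exactly those obtainable from a Hirzebruch surface in at most two steps by blowing up torus fixed points. It therefore suffices to show that every smooth complete toric surface with $\rk \pic(X) \leq 4$ either is $\mathbb{P}^2$ or arises in this way. The surface $\mathbb{P}^2$ can be dispatched immediately, since $\sh{O}_{\mathbb{P}^2}, \sh{O}_{\mathbb{P}^2}(1), \sh{O}_{\mathbb{P}^2}(2)$ is a full strongly exceptional sequence, so from now on I assume $X \neq \mathbb{P}^2$.

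First I would translate the Picard-rank hypothesis into the combinatorics of the fan. For a smooth complete toric surface the number of rays of the fan equals the rank $n$ of the Grothendieck group, and $\rk \pic(X) = n - 2$. Hence $\rk \pic(X) \leq 4$ is equivalent to the fan of $X$ having at most six rays.

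Next I would invoke the minimal-model theory for smooth complete toric surfaces: the only minimal ones are $\mathbb{P}^2$ (three rays) and the Hirzebruch surfaces $\mathbb{F}_a$ (four rays), and any such surface with five or more rays is non-minimal and thus carries a torus-invariant $(-1)$-curve that can be equivariantly contracted, lowering the number of rays by one. Since every smooth complete toric surface with exactly four rays is a Hirzebruch surface, I would contract torus-invariant $(-1)$-curves until reaching four rays; this exhibits $X$ as some $\mathbb{F}_a$ blown up equivariantly at $n - 4 \leq 2$ torus fixed points. Reversing the contractions and grouping the at most two blow-ups—simultaneous centres forming one step, an infinitely near centre forcing a second step—realizes $X$ as a blow-up of a Hirzebruch surface in at most two steps. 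The classification theorem then furnishes a full, and in particular strongly, exceptional sequence of invertible sheaves on $X$.

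The one point I would treat most carefully—the main obstacle—is the guarantee that the contraction process can be halted precisely at a Hirzebruch surface rather than being driven all the way down to $\mathbb{P}^2$. This rests on the fact that every smooth complete toric surface with four rays already is some $\mathbb{F}_a$, so any chain of equivariant blow-downs of $X$ necessarily passes through a Hirzebruch surface, and on the observation that with at most two point blow-ups in total the construction can always be organized into the two permitted steps.
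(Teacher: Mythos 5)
Your proposal is correct and takes essentially the same route as the paper, whose proof is the single observation that $\rk \pic(X) \leq 4$ forces $X \cong \mathbb{P}^2$ or $X$ to be obtained by blowing up some $\mathbb{F}_a$ at most two times, combined implicitly with the preceding classification theorem. You simply spell out the standard toric facts the paper leaves unstated (number of rays equals $\rk \pic(X) + 2$, every smooth complete four-ray fan is a Hirzebruch surface, and equivariant contraction of torus-invariant $(-1)$-curves), all of which are accurate.
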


\begin{proof}
If $\rk \pic(X) \leq 4$, then either $X \cong \mathbb{P}^2$ or $X$ is obtained by blowing up some
$\mathbb{F}_a$ at most two times.
\end{proof}

In \cite{HillePerling06}, a counterexample with Picard number $5$ has explicitly
been verified. The following generalizes this example:

\begin{corollary}[\cite{HillePerling06}]\label{counterexample}
A toric surface given by the following fan for $k \neq 0, 1$ does not admit a full strongly exceptional sequence
of invertible sheaves:
\begin{equation*}
\xygraph{
[]="1"
[d] *+{k - 1}="2"
[luuu] *+{0}="3"
[rd] *+{-k}="4"
[rd] *+{-3}="5"
[d] *+{-2}="6"
[r] *+{-2}="7"
[r] *+{-1}="8"
"1"-"2"
"1"-"3"
"1"-"4"
"1"-"5"
"1"-"6"
"1"-"7"
"1"-"8"
}
\end{equation*}
(note that the integers denote the self-intersection numbers of the corresponding toric prime divisors).
\end{corollary}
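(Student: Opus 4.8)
The plan is to invoke the toric classification theorem established above. Since $X$ is a smooth complete toric surface with seven rays, it is in particular not isomorphic to $\mathbb{P}^2$, so it carries a full strongly exceptional sequence of invertible sheaves if and only if it can be obtained from some Hirzebruch surface $\mathbb{F}_a$ in at most two steps by blowing up torus fixed points. Hence it suffices to prove that, for $k \neq 0, 1$, no such two-step factorization exists. The whole argument is combinatorial, carried out on the level of fans and of the self-intersection numbers of the torus-invariant prime divisors.

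First I would pin down the fan precisely. Reading the self-intersection numbers off the figure and arranging the rays in their genuine cyclic (angular) order, I expect the cyclic sequence of self-intersections to be $(-3, -k, 0, k-1, -2, -2, -1)$. To be certain that this cyclic order is the correct one—and that the surface exists at all—I would reconstruct the primitive ray generators from the recursion $v_{i+1} = -a_i v_i - v_{i-1}$, starting from a basis $v_1, v_2$, and verify that the sequence closes up, equivalently that $\prod_i \left(\begin{smallmatrix} -a_i & -1 \\ 1 & 0\end{smallmatrix}\right)$ is the identity. This simultaneously validates the figure and produces explicit generators, namely $v_1=(1,0),\ v_2=(0,1),\ v_3=(-1,k),\ v_4=(0,-1),\ v_5=(1,-1),\ v_6=(2,-1),\ v_7=(3,-1)$.

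The heart of the matter is a rigidity statement about $(-1)$-curves. For $k \neq 0, 1$ none of the entries $-k$, $0$, $k-1$ equals $-1$, so among the seven boundary divisors exactly one has self-intersection $-1$, namely the ray $(3,-1)$. Now any factorization $X \to X_1 \to \mathbb{F}_a$ through torus-fixed-point blow-ups must, in its final step $X \to X_1$, contract a (possibly empty) collection of torus-invariant $(-1)$-curves of $X$; since there is only one such curve, this step contracts either none or exactly that one. I would then split into the two cases. If it contracts none, then $X \to \mathbb{F}_a$ is a single blow-up of three distinct fixed points, which would require three pairwise disjoint boundary $(-1)$-curves on $X$, and there is only one. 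If it contracts the unique $(-1)$-curve, I compute the image $X_1$ by the standard rule (delete that ray and raise each of the two neighbouring self-intersections by one), obtaining a six-ray toric surface with self-intersection sequence $(-2, -k, 0, k-1, -2, -1)$. For $k \neq 0, 1$ this again has a unique boundary $(-1)$-curve, whereas reaching the four-ray Hirzebruch surface in one further step would require contracting two disjoint $(-1)$-curves on $X_1$. In both cases we reach a contradiction, so no two-step factorization exists and the corollary follows.

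The main obstacle, and really the crux of the proof, is the propagation of uniqueness: that the single $(-1)$-curve of $X$ remains the only one after it is contracted. This is precisely where the hypothesis $k \neq 0, 1$ is used, since at $k = 0$ or $k = 1$ a second boundary $(-1)$-curve appears (the divisor of self-intersection $k-1$, respectively $-k$) and the obstruction disappears—consistent with those values being excluded. Everything else is routine bookkeeping with the self-intersection sequence; the only other point that demands care is confirming that the figure's cyclic order is the one that actually closes to a smooth complete fan, which the recursion above settles.
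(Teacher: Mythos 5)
Your proof is correct and is precisely the argument the paper intends: the statement is placed as a corollary of the preceding classification theorem from \cite{HillePerling08}, and your reduction --- verify the cyclic self-intersection sequence $(-3,-k,0,k-1,-2,-2,-1)$ closes up (your explicit rays do, consistent with $\sum a_i = 12-3n = -9$), observe that for $k \neq 0,1$ there is a unique torus-invariant $(-1)$-curve, and check that after contracting it the six-ray surface $(-2,-k,0,k-1,-2,-1)$ again has only one $(-1)$-ray, ruling out every distribution of the three required contractions over two simultaneous steps --- is exactly the intended combinatorial check, carried out correctly. (The paper itself supplies no proof here; in \cite{HillePerling06} the Picard-number-$5$ instance was instead verified by direct cohomology computations, so your classification-based route is also what makes the general $k$ statement a mere corollary.)
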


We leave it to the reader to write down other counterexamples of Picard number $5$.
A funny observation is that the non-existence of a full strongly exceptional sequence does not imply
the non-existence on blow-ups. The following example is a blow-up of a toric variety of the type
as in Corollary \ref{counterexample} for $k = 2$.

\begin{corollary}
The toric surface described by the following fan admits a full strongly exceptional sequence of
invertible sheaves:
\begin{equation*}
\xygraph{
[]="1"
[d] *+{1}="2"
[luu] *+{-1}="3"
[u] *+{-1}="3b"
[rd] *+{-2}="4"
[rd] *+{-3}="5"
[d] *+{-2}="6"
[r] *+{-2}="7"
[r] *+{-1}="8"
"1"-"2"
"1"-"3"
"1"-"3b"
"1"-"4"
"1"-"5"
"1"-"6"
"1"-"7"
"1"-"8"
}
\end{equation*}
\end{corollary}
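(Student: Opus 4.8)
The plan is to invoke the results proved above for two-step blow-ups of Hirzebruch surfaces. By the theorem characterizing which smooth complete toric surfaces admit a full strongly exceptional sequence of invertible sheaves, it is enough to realize the given surface $X$ as a blow-up of some $\mathbb{F}_a$ in at most two rounds of blowing up torus fixed points; the existence of the sequence then follows from the constructive theorem above (and, if one wants the sequence explicitly, from instantiating the two-step toric system displayed earlier with the appropriate parameters). So the entire task reduces to locating the correct Hirzebruch model and the correct partition of the exceptional rays of $X$.

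First I would read the primitive ray generators off the fan. In cyclic order they may be taken to be $(1,0), (0,1), (-1,2), (-1,1), (0,-1), (1,-1), (2,-1), (3,-1)$, with self-intersection sequence $-3, -2, -1, -1, 0, -2, -2, -1$; these close up to a genuine smooth complete fan (as a check, the self-intersections sum to $12 - 3 \cdot 8 = -12$, as they must for $n = 8$). Next I would single out the four rays $(0,1), (-1,2), (2,-1), (3,-1)$ as the exceptional ones and contract them, leaving $(1,0), (-1,1), (0,-1), (1,-1)$ with self-intersections $0, 1, 0, -1$, i.e. the Hirzebruch surface $\mathbb{F}_1$. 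The point is then to observe how the four exceptional rays sit among the cones of $\mathbb{F}_1$: inside the cone $\langle (1,0), (-1,1)\rangle$ one has $(0,1) = (1,0) + (-1,1)$ and then $(-1,2) = (0,1) + (-1,1)$, a chain of length two, and inside the cone $\langle (1,-1), (1,0)\rangle$ one has $(2,-1) = (1,-1) + (1,0)$ and then $(3,-1) = (2,-1) + (1,0)$, again a chain of length two. Read upward, this says that $X$ is built from $\mathbb{F}_1$ by first blowing up the two distinct fixed points $\langle (1,0),(-1,1)\rangle$ and $\langle (1,-1),(1,0)\rangle$, and then blowing up one further point lying on each of the two resulting exceptional divisors — precisely the two-step pattern required by the theorem.

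The main obstacle is conceptual and is exactly the ``funny observation'' flagged in the text: $X$ is a single blow-up of the Picard-number-$5$ surface of Corollary \ref{counterexample} (for $k = 2$), which carries no full strongly exceptional sequence, so nothing can be transferred along the blow-down and the two-step structure must be produced intrinsically from the fan of $X$. The difficulty is that the ``obvious'' minimal model is misleading: the surface of Corollary \ref{counterexample} blows down to $\mathbb{F}_2$ with all of its exceptional rays strung out as a single chain of length three inside one cone, which needs three rounds and therefore violates the two-step bound. The extra blow-up producing $X$ changes the minimal model to $\mathbb{F}_1$ and redistributes the exceptional rays into two chains of length two, which is what brings the configuration inside the bound. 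Hence the step requiring the most care is the combinatorial one above: choosing the contraction so that one lands on $\mathbb{F}_1$ rather than $\mathbb{F}_2$, and checking that, with this choice, each chain of inserted rays has length at most two, so that the blow-ups genuinely split into two simultaneous rounds.
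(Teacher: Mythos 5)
Your proposal is correct and follows essentially the same route as the paper: the paper's proof simply exhibits the two-step chain of simultaneous toric blow-downs to $\mathbb{F}_1$ (contracting the rays you call $(-1,2)$ and $(3,-1)$, then $(0,1)$ and $(2,-1)$) and invokes the theorem on two-step blow-ups of Hirzebruch surfaces, which is exactly your argument read in the opposite direction with explicit ray coordinates. Your check that the self-intersections must sum to $12 - 3n = -12$ also implicitly corrects a typo in the statement's figure (the ray labelled $1$ should be $0$, as in the first fan of the paper's blow-down chain).
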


To see this, we have only to give an appropriate sequence of simultaneous blow-downs:
\begin{equation*}
\xygraph{
[]="1"
[d] *+{0}="2"
[luu] *+{-1}="3"
[u] *+{-1}="3b"
[rd] *+{-2}="4"
[rd] *+{-3}="5"
[d] *+{-2}="6"
[r] *+{-2}="7"
[r] *+{-1}="8"
[r(0.5)u] *+{\rightsquigarrow}
"1"-"2"
"1"-"3"
"1"-"3b"
"1"-"4"
"1"-"5"
"1"-"6"
"1"-"7"
"1"-"8"
[urr]="a"
[d] *+{0}="b"
[luu] *+{0}="c"
[r] *+{-1}="d"
[rd] *+{-2}="e"
[d] *+{-2}="f"
[r] *+{-1}="g"
[r(0.5)u] *+{\rightsquigarrow}
"a"-"b"
"a"-"c"
"a"-"d"
"a"-"e"
"a"-"f"
"a"-"g"
[urr]="p"
[d] *+{0}="q"
[luu] *+{1}="r"
[rrd] *+{0}="s"
[d] *+{-1}="t"
"p"-"q"
"p"-"r"
"p"-"s"
"p"-"t"
}
\end{equation*}

\begin{remark}\label{lazyremark}
It would possibly be of interest for some readers to compare the explicit computations in \cite{HillePerling06}
with the methods of \cite{HillePerling08}. In particular, the methods for determine cohomology vanishing
on toric varieties have significantly been improved. Unfortunately, providing the setup for this is beyond
the scope of this note. Therefore we have to refer the reader to \cite{HillePerling08}.
\end{remark}

\section{Cyclic strongly exceptional sequences}\label{cyclicsection}

Consider a toric system $A_1, \dots, A_n$.
So far, we have considered the element $A_n$ in a toric system as a placeholder on which we could
conveniently dump cohomologies from augmentation. However, it is reasonable to ask whether there
can be a statement for strongly exceptional sequences analogous to the fact that if the toric system
gives rise to an exceptional sequence, then this is true for every cyclic renumbering of the $A_i$.
This leads in a natural way to the following definition:

\begin{definition*}
An infinite sequence of sheaves $\dots, \sh{E}_i, \sh{E}_{i + 1}, \dots $ is called a (full) {\em cyclic
strongly exceptional sequence} if $\sh{E}_{i + n} \cong \sh{E}_i \otimes \sh{O}(-K_X)$ for
every $i \in \Z$ and if every  subinterval $\sh{E}_{i + 1}, \dots, \sh{E}_{i + n}$
forms a (full) strongly exceptional sequence.
\end{definition*}

Our notion of cyclic strongly exceptional sequences is very close to the concept of a helix as
developed in \cite{Rudakov90}, and in particular to the geometric helices of
\cite{BondalPolishchuk}. The difference is that we do not  require that cyclic strongly exceptional
sequences are generated by mutations. By a result of Bondal
and Polishchuk, the maximal periodicity of a geometric helix on a surface is $3$, which implies
that $\mathbb{P}^2$ is the only rational surface which admits a full geometric helix. Our weaker notion
admits a bigger class of surfaces, but still imposes very strong conditions. First note that if
$A_1, \dots, A_n$ corresponds to a cyclic strongly exceptional sequence, then we have $h^k(\pm A_i) = 0$
for all $k > 0$ and all $1 \leq i \leq n$. This implies by Lemma \ref{acyclicitylemma} and Theorem \ref{structuretheorem}
that for the associated toric surface $Y$, the associated prime divisors $D_1, \dots, D_n$ have
self-intersection numbers $\geq -2$. It is a well-known fact of toric geometry that this in turn implies that
$-K_Y$ is nef. There are actually only $16$ such toric surfaces, which are shown in table \ref{weakdelpezzofigure}.
\begin{table}[htbp]
\centering
\begin{tabular}{c|c}
Name & self-intersection numbers $D_1^2, \dots, D_n^2$ \\ \hline
$\mathbb{P}^2$ & 1, 1, 1 \\
$\mathbb{P}^1 \times \mathbb{P}^1$ & 0, 0, 0, 0 \\
$\mathbb{F}_1$ & 0, 1, 0, -1 \\
$\mathbb{F}_2$ & 0, 2, 0, -2 \\
5a & 0, 0, -1, -1, -1 \\
5b & 0, -2, -1, -1, 1 \\
6a & -1, -1, -1, -1, -1, -1 \\
6b & -1, -1, -2, -1, -1, 0 \\
6c & 0, 0, -2, -1, -2, -1 \\
6d & 0, 1, -2, -1, -2, -2 \\
7a & -1, -1, -2, -1, -2, -1, -1 \\
7b & -1, -1, 0, -2, -1, -2, -2 \\
8a & -1, -2, -1, -2, -1, -2, -1, -2 \\
8b & -1, -2, -2, -1, -2, -1, -1, -2 \\
8c & -1, -2, -2, -2, -1, -2, 0, -2 \\
9 & -1, -2, -2, -1, -2, -2, -1, -2, -2 \\
\end{tabular}
\caption{The 16 complete smooth toric surfaces whose anticanonical divisor is nef.}\label{weakdelpezzofigure}
\end{table}
So, if any surface $X$ admits a cyclic strongly exceptional sequence, then by Theorem \ref{structuretheorem}
its Picard group is isomorphic to the Picard group of one of the toric surfaces in table \ref{weakdelpezzofigure}.
In particular, we get:

\begin{theorem}[\cite{HillePerling08}]
Let $X$ be a smooth complete rational surface which admits a full cyclic strongly exceptional sequence
of invertible sheaves. Then $\rk \pic(X) \leq 7$.
\end{theorem}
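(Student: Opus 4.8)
The plan is to reduce the statement entirely to the classification of the associated toric surface $Y$ supplied by Theorem \ref{structuretheorem}, and then simply to count rays. First I would fix a window $\sh{E}_1, \dots, \sh{E}_n$ of the cyclic sequence and pass to its toric system $A_1, \dots, A_n$, where $A_i = E_{i+1} - E_i$ with $E_{n+1} = E_1 - K_X$. The crucial use of the \emph{cyclic} hypothesis, as opposed to mere strong exceptionality, is that it controls both signs simultaneously: since every window is strongly exceptional, $\Ext^{>0}$ between any two of its members vanishes, so $h^k(E_{i+1} - E_i) = h^k(A_i) = 0$ and $h^k(E_i - E_{i+1}) = h^k(-A_i) = 0$ for all $k > 0$. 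Running over all windows realizes each $A_i$ as a consecutive difference in both directions; in particular the wrap-around class $A_n = E_{n+1} - E_n$ is the last consecutive difference in the shifted window $\sh{E}_2, \dots, \sh{E}_{n+1}$, which is where cyclicity buys the extra vanishing $h^k(-A_n) = 0$ that ordinary strong exceptionality does not provide. Thus $h^k(\pm A_i) = 0$ for all $k > 0$ and all $i$.

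Next I would convert this vanishing into a numerical constraint on $Y$. By Lemma \ref{acyclicitylemma}(\ref{acyclicitylemmai}) we have $\chi(A_i) = A_i^2 + 2$, while Theorem \ref{structuretheorem} identifies $D_i^2 + 2 = \chi(E_{i+1} - E_i) = \chi(A_i)$ for the torus-invariant prime divisors $D_1, \dots, D_n$ of $Y$. Since $h^1(A_i) = h^2(A_i) = 0$, we get $\chi(A_i) = h^0(A_i) \geq 0$, hence $D_i^2 = \chi(A_i) - 2 \geq -2$ for every $i$. On a smooth complete toric surface this self-intersection bound is exactly the nefness condition for the anticanonical class: writing $-K_Y = \sum_j D_j$, one computes $-K_Y \cdot D_i = D_{i-1}\cdot D_i + D_i^2 + D_{i+1}\cdot D_i = D_i^2 + 2 \geq 0$, and since nefness on a toric surface is tested on torus-invariant curves alone, $-K_Y$ is nef.

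Finally I would invoke the classification of smooth complete toric surfaces with nef anticanonical divisor, the toric weak del Pezzo surfaces, which is a finite list: these are precisely the $16$ surfaces of Table \ref{weakdelpezzofigure}. Reading off the number $n$ of rays, which never exceeds $9$ (attained only by the last surface), and using that Theorem \ref{structuretheorem} canonically identifies $\pic(X) \cong \pic(Y)$, I conclude $\rk \pic(X) = \rk \pic(Y) = n - 2 \leq 7$. I expect the main obstacle to be precisely this classification step: one must show that the local conditions $D_i^2 \geq -2$ along a complete smooth fan in $\Z^2$ force the surface into a finite list, and then enumerate exactly the sixteen entries of the table together with their ray counts. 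This is where the genuine work lies, even though it is standard toric combinatorics; by contrast the cohomological part of the argument is a short chain of implications once Theorem \ref{structuretheorem} and Lemma \ref{acyclicitylemma} are available.
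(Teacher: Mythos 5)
Your proposal is correct and takes essentially the same route as the paper: cyclicity gives $h^k(\pm A_i) = 0$ for all $k > 0$ and all $i$ (including $A_n$, via the shifted window), whence by Lemma \ref{acyclicitylemma} and Theorem \ref{structuretheorem} the divisors of the associated toric surface $Y$ satisfy $D_i^2 \geq -2$, so $-K_Y$ is nef, and the classification into the sixteen surfaces of Table \ref{weakdelpezzofigure} bounds the number of rays by $9$, giving $\rk \pic(X) = n - 2 \leq 7$. You merely make explicit two steps the paper leaves implicit (the wrap-around vanishing for $A_n$ and the toric nefness computation $-K_Y \cdot D_i = D_i^2 + 2$), which is entirely consistent with its argument.
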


This is a rather strong bound which implies that not even every del Pezzo surface admits such a sequence. As
in Corollary \ref{counterexample}, a small Picard number does not necessarily imply the existence of a full
cyclic strong exceptional sequence. However, there are the following existence results:

\begin{theorem}[\cite{HillePerling08}]
Let $X$ be a del Pezzos surface with $\rk \pic(X) \leq 7$, then there exists a full cyclic
strongly exceptional sequence of invertible sheaves on $X$.
\end{theorem}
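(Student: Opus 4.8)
The plan is to realize, for each such $X$, an explicit toric system whose associated toric surface is one of the sixteen weak del Pezzo surfaces of table \ref{weakdelpezzofigure}, and then to verify the cyclic vanishing directly. First I would record the classification: a del Pezzo surface with $\rk\pic(X)\le 7$ is $\mathbb{P}^2$, $\mathbb{P}^1\times\mathbb{P}^1$, or a blow-up of $\mathbb{P}^2$ in $r\le 6$ points in general position, and its Picard lattice together with $K_X$ is isometric to the Picard lattice of a toric surface $Y$ of the same rank appearing in table \ref{weakdelpezzofigure}. Fixing such an isometry $\pic(Y)\cong\pic(X)$, the torus-invariant boundary classes $D_1,\dots,D_n$ of $Y$ transport to classes $A_1,\dots,A_n$ on $X$ which, by Theorem \ref{structuretheorem} and Lemma \ref{acyclicitylemma}, automatically satisfy the defining intersection relations of a toric system, with $A_i^2=D_i^2\ge -2$ for all $i$.

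It then remains to check that $A_1,\dots,A_n$ gives a cyclic strongly exceptional sequence on $X$, i.e.\ that $h^k(\pm D_I)=0$ for all $k>0$ and every proper cyclic interval $I\subset\{1,\dots,n\}$, where $D_I:=\sum_{i\in I}A_i$. I would first dispose of the numerics. A short computation using Lemma \ref{acyclicitylemma} gives $\chi(D_I)=-K_X\cdot D_I=\sum_{i\in I}(A_i^2+2)\ge 0$ and $\chi(-D_I)=0$, so the negative classes must be totally acyclic, while for the positive classes it suffices to force $h^0(D_I)=\chi(D_I)$. Since $-K_X$ is ample and $(K_X-D_I)\cdot(-K_X)=-(-K_X)^2-(-K_X\cdot D_I)<0$, Serre duality kills $h^2(D_I)$; writing $K_X+D_I=-D_{I^c}$ for the complementary interval $I^c$, the same ampleness kills $h^2(-D_I)=h^0(-D_{I^c})$ and also $h^0(-D_I)$, because $-K_X\cdot D_I>0$ and $-K_X\cdot D_{I^c}>0$ unless the interval consists entirely of $(-2)$-classes, and such root classes are non-effective on $X$ as $X$ carries no $(-2)$-curve. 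For the negative classes this already yields $h^1(-D_I)=h^0+h^2-\chi=0$.

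After this reduction the only surviving cohomology is $h^1(D_I)$ for the positive classes, and this is the heart of the matter. The plan is to show that each $D_I$ is nef on $X$; then $D_I-K_X=D_I+(-K_X)$ is the sum of a nef and an ample class, hence ample, and Kawamata--Viehweg vanishing applied to $h^1(D_I)=h^1\big(K_X+(D_I-K_X)\big)$ forces $h^1(D_I)=0$, which combined with $h^2(D_I)=0$ gives $h^0(D_I)=\chi(D_I)$ and closes the argument exactly as in Proposition \ref{blowuponce}. The main obstacle is precisely this nefness: the partial sums $D_I$ are sums of consecutive boundary classes and are not visibly nef, so one must either arrange the transporting isometry (equivalently, the cyclic enumeration) so that every $D_I$ lands in the nef cone of $X$, or replace nefness by a del Pezzo--specific acyclicity criterion phrased purely in terms of $D_I^2$ and $-K_X\cdot D_I$ together with the $(-1)$- and $(-2)$-class configuration. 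I expect the cleanest route to be a Weyl-group argument: the collection of acyclic classes on a del Pezzo surface is invariant under the Weyl group of the root system in $K_X^\perp$ and is determined by intersection data alone, so acyclicity can be transported from the toric model $Y$, where it holds by the explicit toric computation underlying table \ref{weakdelpezzofigure}, to the non-toric surface $X$.
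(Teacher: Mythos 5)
You should know at the outset that this survey contains no proof of this theorem---it is quoted from \cite{HillePerling08}---so your proposal can only be measured against the strategy the surrounding text implies, and there your skeleton is right: transport a toric system of weak del Pezzo type (table \ref{weakdelpezzofigure}) to $X$ via an isometry of Picard lattices matching canonical classes (such a $Y$ of the right rank and lattice type exists for every $\rk\pic(X)\le 7$), then verify $h^k(\pm D_I)=0$ for $k>0$ interval by interval. Your numerical reductions are also correct: $\chi(-D_I)=0$ for every cyclic interval, $\chi(D_I)=-K_X\cdot D_I=\sum_{i\in I}(A_i^2+2)\ge 0$, the $h^2$'s and the $h^0$'s of the negative classes die against the ample $-K_X$, and root classes are never effective on a genuine del Pezzo, so everything reduces, exactly as you say, to $h^1(D_I)=0$ for the intervals with $-K_X\cdot D_I>0$. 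That residual step, however, is the actual content of the theorem, and both of your proposed ways of closing it fail.

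Concretely: (a) the nefness route is unattainable, not merely ``not visibly'' so. Among the intervals with $-K_X\cdot D_I>0$ are all single entries $A_i$ with $A_i^2=-1$, and these are unavoidable---for $\rk\pic(X)=5,6,7$ the only admissible types 7a, 7b, 8a--8c, 9 all contain $(-1)$- and $(-2)$-entries---while a class of negative self-intersection is never nef, so no choice of isometry or cyclic enumeration lets Kawamata--Viehweg reach these classes; one must instead prove $h^0(D_I)=\chi(D_I)$ directly, e.g.\ by showing $(-1)$-classes are represented by exceptional curves and peeling such curves off the remaining $D_I$. (b) The Weyl-group route rests on a false premise: acyclicity does \emph{not} ``hold on the toric model $Y$''. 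Whenever $Y$ has a boundary divisor with $D_i^2=-2$, that single-entry interval is an effective torus-invariant curve on $Y$, so $h^0(D_i)\ge 1$ while $\chi(D_i)=D_i^2+2=0$, whence $h^1(D_i)=h^0(D_i)+h^2(D_i)\ge 1$; the boundary system of $Y$ is in general not cyclic strongly exceptional on $Y$ itself, and the paper's table \ref{weakdelpezzocyclic} records precisely this ($\mathbb{F}_2$ carries no system of type $\mathbb{F}_2$, 6d only one of type 6a, 8c only of type 8a, etc.). Table \ref{weakdelpezzofigure} is a classification of toric surfaces with $-K$ nef, not a cohomology verification, so there is nothing there to transport. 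The same examples show that ``acyclic classes are determined by intersection data alone'' is false on weak del Pezzo surfaces, where effective and non-effective roots are numerically indistinguishable; it becomes true on a del Pezzo only because of the generality of the blown-up points (Cremona/Weyl invariance of dimensions of linear systems through \emph{general} points), and that invariance, together with an orbit analysis reducing each $D_I$ with $D_I^2\ge -1$ to a checkable normal form, is a genuine theorem you would have to prove on $X$ itself. This missing lemma---on a del Pezzo surface every divisor $D$ with $\chi(-D)=0$ and $D^2\ge -2$ is strongly left-orthogonal---is exactly the del Pezzo-specific input of \cite{HillePerling08} that the survey suppresses, and without it your argument proves the theorem only for the types with no negative entries (essentially $\mathbb{P}^2$ and $\mathbb{P}^1\times\mathbb{P}^1$).
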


\begin{theorem}[\cite{HillePerling08}]
Let $X$ be a smooth complete toric surface, then there exists a full cyclic strongly exceptional sequence
of invertible sheaves on $X$ if and only if $-K_X$ is nef.
\end{theorem}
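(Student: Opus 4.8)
The plan is to prove both implications, using the observations recorded just before the statement together with Theorem~\ref{structuretheorem} and the classification in Table~\ref{weakdelpezzofigure}. Throughout I treat $h^2$ by Serre duality (the relevant classes $\pm\sum_{i\in I}A_i$ have $h^2=0$ because their Serre duals are strictly anti-effective), so that in every case the real content is the vanishing of $H^1$ on $X$.

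For the implication that existence forces $-K_X$ to be nef, I would start from the fact, already extracted above, that a full cyclic strongly exceptional sequence with toric system $A_1,\dots,A_n$ satisfies $h^k(\pm A_i)=0$ for all $k>0$ and all $i$, so that by Lemma~\ref{acyclicitylemma} and Theorem~\ref{structuretheorem} the associated toric surface $Y$ has all boundary self-intersections $\geq -2$, i.e.\ $-K_Y$ is nef. The remaining, genuinely surface-specific, point is to upgrade this to a statement about $X$ itself, since in general $Y\not\cong X$ (for instance the natural cyclic system $P,\,-P+Q,\,P,\,-P+Q$ on $\mathbb{F}_2$ produces $Y\cong\mathbb{P}^1\times\mathbb{P}^1$). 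For a toric surface, $-K_X$ is nef precisely when every torus-invariant prime divisor $\bar D$ satisfies $\bar D^2\geq -2$, because $-K_X.\bar D=\bar D^2+2$. So I would argue by contradiction: assuming some boundary curve has $\bar D^2\leq -3$, I would use the combinatorial description of $H^1$ of a toric divisor (as the failure of a support region in the fan to be connected) to produce a cyclic partial sum $\sum_{i\in I}A_i$, i.e.\ a difference $E_b-E_a$ with $|b-a|\le n-1$ occurring in one of the length-$n$ windows, whose $H^1$ on $X$ is nonzero, contradicting the strong exceptionality of that window.

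For the converse, assume $-K_X$ is nef. Then $X$ is one of the sixteen surfaces of Table~\ref{weakdelpezzofigure}. For those with $-K_X$ ample, namely the toric del Pezzo surfaces ($\mathbb{P}^2$, $\mathbb{P}^1\times\mathbb{P}^1$, $\mathbb{F}_1$, and the entries $5a,6a$, all of Picard rank $\leq 7$), a full cyclic strongly exceptional sequence exists by the del Pezzo existence theorem quoted above. For the remaining surfaces, which carry $(-2)$-curves (such as $\mathbb{F}_2$ and the entries $5b,6b,\dots,9$), I would exhibit a cyclic system explicitly: on $\mathbb{F}_a$ with $a\le 2$ take the family $P,\,sP+Q,\,P,\,-(a+s)P+Q$ with $s\ge -1$ and $a+s\le 1$, augment it at the torus fixed points being blown up, and then verify $h^k\big(\pm\sum_{i\in I}A_i\big)=0$ for every proper cyclic interval $I$. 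Here nefness of $-K_X$ is exactly what keeps all the relevant support regions connected and forces the higher cohomology to vanish.

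The hard part, in both directions, is this cohomology vanishing: translating ``$-K_X$ nef'' into the connectivity statements that govern $H^1$ of the divisors $\pm\sum_{i\in I}A_i$, and conversely showing that a single boundary curve of self-intersection $\le -3$ unavoidably disconnects one such region for one of the required windows. This is precisely the toric cohomology computation alluded to in Remark~\ref{lazyremark}, and it is where essentially all of the work resides; the reductions above to Theorem~\ref{structuretheorem}, to the self-intersection criterion, and to the finite list of Table~\ref{weakdelpezzofigure} are comparatively formal.
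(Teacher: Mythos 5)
You should first note that the survey contains no proof of this statement at all: it is quoted from \cite{HillePerling08}, and the paper's own contribution is the surrounding sketch --- the observation that cyclicity gives $h^k(\pm A_i)=0$ for $k>0$, hence via Lemma~\ref{acyclicitylemma} and Theorem~\ref{structuretheorem} the associated surface $Y$ is one of the sixteen surfaces of Table~\ref{weakdelpezzofigure}, together with the ``tedious but straightforward'' check of all standard augmentations on those sixteen surfaces, whose outcome is Table~\ref{weakdelpezzocyclic}. Your skeleton coincides with that sketch, and you deserve credit for two points the survey glosses over: you correctly observe that the displayed reduction only makes $-K_Y$ nef rather than $-K_X$ (your $\mathbb{F}_2$ versus $\mathbb{P}^1\times\mathbb{P}^1$ example is exactly the asymmetry visible in Table~\ref{smallcyclic}), and your reduction of everything to $H^1$ is legitimate --- though it is better justified by noting that $\chi(-D)=0$ together with $h^1(-D)=0$ forces $h^0(-D)=h^2(-D)=0$ outright, and that $h^2$ of a positive interval sum equals $h^0$ of minus the complementary interval sum because $\sum_i A_i = -K_X$, than by your unproven assertion that the Serre duals are ``strictly anti-effective.''

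The genuine gap sits exactly where you admit the work resides, and it is not merely deferred labor: your necessity argument hinges on the claim that a boundary curve with $\bar D^2\le -3$ on $X$ ``unavoidably disconnects'' an $H^1$-support region for some window sum $\sum_{i\in I}A_i$. But the $A_i$ are unknown classes in $\pic(X)$ attached to a hypothetical sequence, so this claim requires quantifying over all candidate toric systems, and nothing in your outline controls them; a priori the sequence could be chosen so that every window avoids detecting the bad curve. The structure that makes this quantification finite is the theorem quoted earlier in this very paper --- on a smooth complete toric surface, every full strongly exceptional sequence of invertible sheaves comes from a toric system which is a standard augmentation --- and your proposal never invokes it. That classification is the actual engine of the necessity direction in \cite{HillePerling08}: it pins the toric system to the blow-up combinatorics of $X$ itself, reduces the problem to the finite check recorded in Table~\ref{weakdelpezzocyclic}, and is the same machinery that yields counterexamples such as Corollary~\ref{counterexample}. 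Without it, or without genuinely executing the toric $H^1$-connectivity analysis alluded to in Remark~\ref{lazyremark}, your forward direction is a program rather than a proof; the sufficiency direction is likewise an unperformed case verification, though there your plan does agree with what the paper reports having done.
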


A particular interest in cyclic strongly exceptional sequences comes from the fact that 
the total space $\pi: \omega_X \rightarrow X$ of the canonical bundle
is a local Calabi-Yau manifold. It follows from results of Bridgeland \cite{Bridgeland05} that
a full strongly exceptional sequence $\sh{E}_1, \dots, \sh{E}_n$ on $X$ is cyclic iff the pullbacks
$\pi^*\sh{E}_1, \dots, \pi^*\sh{E}_n$ form a strongly exceptional sequence on $\omega_X$
(note that the sheaves $\pi^*\sh{E}_i$ are not simple, so that on $\omega_X$ we have to
consider a slightly different notion for exceptional sequences, see \cite{Bridgeland05}).
The direct sum $\bigoplus_{i = 1}^n \pi^*\sh{E}_i$
represents a tilting sheaf, whose associated endomorphism algebra in general is not finite. It can
be represented by a quiver with relations which has loops and contains the quiver of $\oplus_i \sh{E}_i$ as a subquiver.
Table \ref{smallcyclic} shows all possible cyclic strongly exceptional toric systems and their quivers for
varities with Picard numbers $1$ and $2$.
\begin{table}[htbp]
\centering
\begin{tabular}{|>{\PBS\centering\hspace{0pt}}m{1.3cm}|>{\PBS\centering\hspace{0pt}}m{3.3cm}|>{\PBS\centering\hspace{0pt}}m{3cm}|} \hline
surface & toric system & quiver\\ \hline
$\mathbb{P}^2$ & H, H, H &
$
\xygraph{
[] *+[o]+@{o}="1"
[rr] *+[o]+@{o}="2"
[lu] *+[o]+@{o}="3"
"1":@/^0.5em/"2"
"1":"2"
"1":@/_0.5em/"2"
"2":@/^0.5em/"3"
"2":"3"
"2":@/_0.5em/"3"
"3":@/^0.5em/"1"
"3":"1"
"3":@/_0.5em/"1"
}
$ \\ \hline
$\mathbb{P}^1 \times \mathbb{P}^1$ & P, Q, P, Q
&
$
\xygraph{
[] *+[o]+@{o}="1"
[r] *+[o]+@{o}="2"
[u] *+[o]+@{o}="3"
[l] *+[o]+@{o}="4"
"1":@/^0.2em/"2"
"1":@/_0.2em/"2"
"2":@/^0.2em/"3"
"2":@/_0.2em/"3"
"3":@/^0.2em/"4"
"3":@/_0.2em/"4"
"4":@/^0.2em/"1"
"4":@/_0.2em/"1"
}
$
\\ \hline
$\mathbb{P}^1 \times \mathbb{P}^1$ & P, P + Q, P, -P + Q &
$
\xygraph{
[] *+[o]+@{o}="1"
[r] *+[o]+@{o}="2"
[u] *+[o]+@{o}="3"
[l] *+[o]+@{o}="4"
"1":@/^0.2em/"2"
"1":@/_0.2em/"2"
"1":@/^0.2em/"4"
"1":@/_0.2em/"4"
"2":@/^0.2em/"3"
"2":@/_0.2em/"3"
"4":@/^0.2em/"3"
"4":@/_0.2em/"3"
"3":@/^0.3em/"1"
"3":@/_0.3em/"1"
"3":@/^0.1em/"1"
"3":@/_0.1em/"1"
}
$
\\ \hline
$\mathbb{F}_1$ & P, Q, P, -P + Q &
$
\xygraph{
[] *+[o]+@{o}="1"
[r] *+[o]+@{o}="2"
[u] *+[o]+@{o}="3"
[l] *+[o]+@{o}="4"
?"1":@/^0.2em/"2"
"1":@/_0.2em/"2"
"2":@/-0.2em/"3"
"3":@/^0.2em/"4"
"3":@/_0.2em/"4"
"4":@/^0.2em/"1"
"4":@/_0.2em/"1"
"4":@/-0.3em/"1"
"1":@/-0em/"3"
"2":@/-0.1em/"4"
}
$
\\ \hline
$\mathbb{F}_2$ & P, -P + Q, P, -P + Q &
$
\xygraph{
[] *+[o]+@{o}="1"
[r] *+[o]+@{o}="2"
[u] *+[o]+@{o}="3"
[l] *+[o]+@{o}="4"
"1":@/^0.2em/"2"
"1":@/_0.2em/"2"
"2":@/^0.2em/"3"
"2":@/_0.2em/"3"
"3":@/^0.2em/"4"
"3":@/_0.2em/"4"
"4":@/^0.2em/"1"
"4":@/_0.2em/"1"
"1":@/_0.2em/"3"
"3":@/_0.2em/"1"
"2":@/_0.2em/"4"
"4":@/_0.2em/"2"
}
$
\\ \hline
\end{tabular}
\caption{The cyclic strongly exceptional sequences for Picard numbers $1$ and $2$.}\label{smallcyclic}
\end{table}
These are well-known and their relations can be found elsewhere (\cite{Bondal90}, \cite{King2}, \cite{perling6}).

We can see in table \ref{smallcyclic} that $\mathbb{P}^1 \times \mathbb{P}^1$ admits two cyclic strongly
exceptional toric systems, whose associated toric surfaces are isomorphic to $\mathbb{P}^1 \times \mathbb{P}^1$
and $\mathbb{F}_2$, respectively. On the other hand, $\mathbb{F}_2$ admits only one such toric system,
and its associated toric surface is isomorphic to $\mathbb{P}^1 \times \mathbb{P}^1$. This phenomenon occurs
also for other toric weak del Pezzo surfaces, where more symmetric cases admit more types of cyclic strongly
exceptional toric systems than
the less symmetric ones. Using toric methods (which we will not discuss here, see Remark \ref{lazyremark}), it is a
somewhat tedious but straightforward exercise to check all possible standard augmentations for the $16$ cases.
The result is shown in table \ref{weakdelpezzocyclic}.
\begin{table}[htbp]
\centering
\begin{tabular}{ l | l}
surface & type of toric system \\ \hline
$\mathbb{P}^2$ & $\mathbb{P}^2$\\
$\mathbb{P}^1 \times \mathbb{P}^1$ & $\mathbb{P}^1 \times \mathbb{P}^1$, $\mathbb{F}_2$ \\
$\mathbb{F}_1$ & $\mathbb{F}_1$ \\
$\mathbb{F}_2$ & $\mathbb{P}^1 \times \mathbb{P}^1$ \\
5a & 5a, 5b \\
5b & 5a \\
6a & 6a, 6b, 6c, 6d \\
6b & 6a, 6b, 6c \\
6c & 6a, 6b \\
6d & 6a \\
7a & 7a, 7b \\
7b & 7a \\
8a & 8a, 8b, 8c \\
8b & 8a,  8b \\
8c & 8a \\
9 & 9 \\
\end{tabular}
\caption{The table shows which toric weak del Pezzo surfaces can appear as cyclic strongly exceptional toric system on another
toric surface.}\label{weakdelpezzocyclic}
\end{table}

\section{Some noncommutative resolutions of singularities}

There are precisely five toric weak del Pezzos surfaces such that the space $\omega_X$ provides a crepant
resolution of a quotient singularity. More precisely, in these cases, we have a diagram\footnote{we assume in this
section that our base field is $\C$}:
\begin{equation*}
\xymatrix{
& \omega_X \ar[r] \ar[d] & X \\
\C^3 \ar[r] & \C^3 / G,
}
\end{equation*}
where the vertical morphism is the contraction of the base of $\omega_X$ and $G$ a finite abelian subgroup
of $\operatorname{SL}_3(\C)$. In these five case we can construct cyclic strongly exceptional toric systems
such that the corresponding quiver coincides with the McKay quiver of the action of $G$ on $\C^3$.
\begin{table}
 \centering
\begin{tabular}{c c c}
\begin{tabular}{c}
$\mathbb{P}^2$\\
$\xygraph{
[] *+[o]+@{o}="1"
[rr] *+[o]+@{o}="2"
[lu] *+[o]+@{o}="3"
"1":@/^0.5em/"2"
"1":"2"
"1":@/_0.5em/"2"
"2":@/^0.5em/"3"
"2":"3"
"2":@/_0.5em/"3"
"3":@/^0.5em/"1"
"3":"1"
"3":@/_0.5em/"1"
}$\\[1mm]
$\C^3 / \Z_3$
\end{tabular}&
\begin{tabular}{c}
$\mathbb{F}_2$ \\
$\xygraph{
[] *+[o]+@{o}="1"
[r] *+[o]+@{o}="2"
[u] *+[o]+@{o}="3"
[l] *+[o]+@{o}="4"
"1":@/^0.2em/"2"
"1":@/_0.2em/"2"
"2":@/^0.2em/"3"
"2":@/_0.2em/"3"
"3":@/^0.2em/"4"
"3":@/_0.2em/"4"
"4":@/^0.2em/"1"
"4":@/_0.2em/"1"
"1":@/_0.2em/"3"
"3":@/_0.2em/"1"
"2":@/_0.2em/"4"
"4":@/_0.2em/"2"
}
$\\[1mm]
$\C^3 / \Z_4$
\end{tabular}&
\begin{tabular}{c}
6d \\
$\xygraph{
[] *+[o]+@{o}="1"
[rd] *+[o]+@{o}="2"
[r] *+[o]+@{o}="3"
[ur] *+[o]+@{o}="4"
[lu]*+[o]+@{o}="5"
[l]*+[o]+@{o}="6"
"1":@/-0em/"2"
"1":@/-0em/"3"
"1":@/_0.2em/"4"
"2":@/-0em/"3"
"2":@/-0em/"4"
"2":@/_0.2em/"5"
"3":@/-0em/"4"
"3":@/-0em/"5"
"3":@/_0.2em/"6"
"4":@/-0em/"5"
"4":@/-0em/"6"
"4":@/_0.2em/"1"
"5":@/-0em/"6"
"5":@/-0em/"1"
"5":@/_0.2em/"2"
"6":@/-0em/"1"
"6":@/-0em/"2"
"6":@/_0.2em/"3"
}
$\\[1mm]
$\C^3 / \Z_6$
\end{tabular}\\
\begin{tabular}{c}
8c\\
$\xygraph{
[] *+[o]+@{o}="1"
[ld] *+[o]+@{o}="2"
[d] *+[o]+@{o}="3"
[rd] *+[o]+@{o}="4"
[uuur] *+[o]+@{o}="5"
[rd] *+[o]+@{o}="6"
[d] *+[o]+@{o}="7"
[ld] *+[o]+@{o}="8"
"1":@/-0em/"2"
"1":@/-0em/"6"
"1":@/^0.1em/"7"
"2":@/-0em/"3"
"2":@/-0em/"7"
"2":@/^.1em/"8"
"3":@/-0em/"4"
"3":@/-0em/"8"
"3":@/^0.1em/"5"
"4":@/-0em/"1"
"4":@/-0em/"5"
"4":@/^0.1em/"6"
"5":@/-0em/"6"
"5":@/-0em/"2"
"5":@/^0.1em/"3"
"6":@/-0em/"7"
"6":@/-0em/"3"
"6":@/^0.1em/"4"
"7":@/-0em/"8"
"7":@/-0em/"4"
"7":@/^0.1em/"1"
"8":@/-0em/"5"
"8":@/-0em/"1"
"8":@/^0.1em/"2"
}$\\[1mm]
$\C^3 / \Z_2 \times \Z_4$
\end{tabular}&
\begin{tabular}{c}
9\\
$\xygraph{
[] *+{\ }
[r(2.37)] *+[o]+@{o}="1"
[d(0.3)l] *+[o]+@{o}="2"
[l(0.75)d] *+[o]+@{o}="3"
[r(0.25)d(1.2)] *+[o]+@{o}="4"
[d(0.8)r(0.9)] *+[o]+@{o}="5"
[r(1.2)] *+[o]+@{o}="6"
[u(0.8)r(0.9)] *+[o]+@{o}="7"
[r(0.25)u(1.2)] *+[o]+@{o}="8"
[l(0.75)u] *+[o]+@{o}="9"
"1":@/-0em/"2"
"1":@/-0em/"5"
"1":@/-0em/"8"
"2":@/-0em/"3"
"2":@/-0em/"6"
"2":@/-0em/"9"
"3":@/-0em/"4"
"3":@/-0em/"7"
"3":@/-0em/"1"
"4":@/-0em/"5"
"4":@/-0em/"8"
"4":@/-0em/"2"
"5":@/-0em/"6"
"5":@/-0em/"9"
"5":@/-0em/"3"
"6":@/-0em/"7"
"6":@/-0em/"1"
"6":@/-0em/"4"
"7":@/-0em/"8"
"7":@/-0em/"2"
"7":@/-0em/"5"
"8":@/-0em/"9"
"8":@/-0em/"3"
"8":@/-0em/"6"
"9":@/-0em/"1"
"9":@/-0em/"4"
"9":@/-0em/"7"
}$\\[1mm]
$\C^3 / \Z_3 \times \Z_3$
\end{tabular}
\end{tabular}
\caption{The five Mckay quivers coming from cyclic strongly exceptional toric systems on toric surfaces.}\label{mckaytable}
\end{table}
Table \ref{mckaytable} shows the five cases and their corresponding quivers. It turns out that the associated
algebras are isomorphic to the skew group algebras $\C[x,y,z] * G$ with respect to the induced action of $G$
on $\C[x,y,z]$. To exemplify this, consider the case of $\C^3 / \Z_4$. The group $\Z_4$ acts with weights
$(1,1,2)$ on $\C^3$. The quiver with relations is given by:
\begin{equation*}
\xygraph{!{0;<2cm,0cm>:<0cm,2cm>::}
[] *+[o]+@{o}="1"
[rr] *+[o]+@{o}="2"
[uu] *+[o]+@{o}="3"
[ll] *+[o]+@{o}="4"
[r(3.5)] *+\txt{$b_2 a_1 = b_1 a_2$}
[d(0.2)] *+\txt{$c_2 b_1 = c_1 b_2$}
[d(0.2)] *+\txt{$d_2 c_1 = d_1 c_2$}
[d(0.2)] *+\txt{$a_2 d_1 = a_1 d_2$}
[d(0.2)] *+\txt{$e_1 a_i = c_i f_2$ for $i = 1, 2$}
[d(0.2)] *+\txt{$f_2 b_i = d_i e_1$ for $i = 1, 2$}
[d(0.2)] *+\txt{$e_2 c_i = a_i f_2$ for $i = 1, 2$}
[d(0.2)] *+\txt{$f_1 d_i = b_i e_2$ for $i = 1, 2$}
"1":@/^0.3em/"2"^{a_1}
"1":@/_0.3em/"2"_{a_2}
"2":@/^0.3em/"3"^{b_1}
"2":@/_0.3em/"3"_{b_2}
"3":@/^0.3em/"4"^{c_1}
"3":@/_0.3em/"4"_{c_2}
"4":@/^0.3em/"1"^{d_1}
"4":@/_0.3em/"1"_{d_2}
"1":@/_0.3em/"3"_(0.25){f_1}
"3":@/_0.3em/"1"_(0.25){f_2}
"2":@/_0.3em/"4"_(0.25){e_1}
"4":@/_0.3em/"2"_(0.25){e_2}
}
\end{equation*}
One compares this directly with the quiver corresponding to $\C[x,y,z] * \Z_4$
(see \cite{CrawMaclaganThomas07b}, Remark 2.7). Skew group
algebras are examples for noncommutative crepant resolutions in the sense of van den Bergh
\cite{vandenBergh04a}, \cite{vandenBergh04b}.

\comment{

\begin{table}[htbp]
\centering
\begin{tabular}{|c|c|}\hline
$\chi(D)$ & $D$\\ \hline
$0$ & $R_1 - R_2$, $R_2 - R_1$, $Q - P - R_1$\\ \hline
$1$ & $R_1$, $R_2$, $P - R_1$, $P - R_2$, $Q - P$, $Q - R_1 - R_2$\\ \hline
$2$ & $P$, $Q - R_1$, $Q - R_2$ \\ \hline
$3$ & $Q$, $Q + P - R_1 - R_2$ \\ \hline
$4$ & $2Q - R_1 - R_2$, $P + Q - R_1$, $P + Q - R_2$ \\ \hline
$5$ & $2Q - R_1$, $P + 2Q - 2 R_1 - R_2$, $2Q - R_2$, $2P + Q - R_1 - R_2$ \\ \hline
$6$ & $2Q$, $P + 2Q - 2 R_1$, $2P + Q - R_1$, $2P + Q - R_2$, $3Q - 2 R_1 - R_2$ \\ \hline
families & $Q + nP$ for $n \geq -1$, \\
& $Q + nP - R_1$ for $n \geq -1$, \\
& $Q + nP - R_2$ for $n \geq 0$ \\
& $Q + n(Q - R_1)$ for $n \geq 0$ \\
& $P + n(Q - R_1)$ for $n \geq 0$\\ \hline
\end{tabular}
\caption{Strongly leftorthogonal divisors with Euler characteristic $\leq 3$ on the variety 9}\label{totallyacyclic9}
\end{table}
}

\section{Commutative and Non-commutative deformations}\label{deformationsection}

Tilting correspondence provides a way to study noncommutative deformations of algebraic
varieties via deformations of their derived categories. We will give two straightforward
examples of such
deformations by means of deformations of the endomorphism algebras of tilting sheaves,
i.e. if $A$ is such an algebra and $A_t$ a deformation of this algebra with respect to some
parameter $t$, then $D^b(A_t-\operatorname{mod})$ is considered as deformation of
$D^b(A-\operatorname{mod})$. However, we will not consider any formal setup for this
kind of deformation, so one should more carefully speak of a mere parametrization
of certain derived categories. We will see below (Theorem \ref{p2blowup}) that this
kind of parametrization is compatible with well-known geometric deformations.
In a similar spirit, deformations have been studied
in the context of homological mirror symmetry (e.g. see \S 2 of \cite{AKO2} ).

Consider first the quiver with relations as shown in figure \ref{fanoncom}.
\begin{figure}[htbp]
\begin{equation*}
\xygraph{!{0;<1.5cm,0cm>:<0cm,2cm>::}
[] *+[o]+@{o}="1"
[rr] *+[o]+@{o}="2"
[r] *+{\vdots}
[r] *+[o]+@{o}="3"
[rr] *+[o]+@{o}="4"
"1":@/^0.3em/"2"^{a_1}
"1":@/_0.3em/"2"_{a_2}
"2":@/^1.5em/"3"^{l_0}
"2":@/_1.5em/"3"_{l_{k + 1}}
"3":@/^0.3em/"4"^{d_1}
"3":@/_0.3em/"4"_{d_2}
}
\end{equation*}
\begin{align*}
p_i (l_i a_1 &- l_{i + 1} a_2) \text{ for } 0 \leq i < k  & 
q_i (d_1 l_i &- d_2 l_{i + 1}) \text{ for } 0 \leq i < k \\
\end{align*}
\caption{A path algebra whose relations depend on parameters $p_i, q_i$.}
\label{fanoncom}
\end{figure}
Here, we have a quiver with relations which depend on parameters $p_i, q_i$ from our base
field. For fixed $k$ we denote $\mathcal{M}_k$ the $3k$-dimensional parameter space of parameters
$p_i, q_i, r_i$ as above.

\begin{theorem}
The parameter space $\mathcal{M}_k$ contains the path algebras associated to the toric systems of the
form $P, sP + Q, P, -(s + a) P + Q$ on $\mathbb{F}_a$, where $a = k - 1 - 2s$ and $0 \leq s < \frac{k + 1}{2}$.
\end{theorem}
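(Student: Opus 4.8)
The plan is to realize the tilting algebra of each toric system $P, sP + Q, P, -(a+s)P + Q$ on $\mathbb{F}_a$ as a single point of $\mathcal{M}_k$, by matching it against the quiver with relations of figure \ref{fanoncom} after a suitable choice of the parameters $p_i, q_i$ (and $r_i$). The geometric input I would take directly from the computation recorded in figure \ref{fa2}: for $s \ge 0$ and $a + s \ge 1$ the sheaves $\sh{O}, \sh{O}(P), \sh{O}\big((s+1)P + Q\big), \sh{O}\big((s+2)P + Q\big)$ form a full strongly exceptional sequence, and its endomorphism algebra is the path algebra of the four-vertex quiver with arrows $a_1, a_2$ from vertex $1$ to $2$, the $b$- and $c$-families from $2$ to $3$, and $d_1, d_2$ from $3$ to $4$, modulo the relations $b_i a_1 = b_{i+1} a_2$, $c_i a_1 = c_{i+1} a_2$, $d_1 b_i = d_2 b_{i+1}$, $d_1 c_i = d_2 c_{i+1}$. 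This presentation (obtained by the toric methods referred to around figure \ref{fa2}) is what I would invoke rather than recompute.

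Next I would do the combinatorial bookkeeping. The $b$-family has $s+1$ arrows and the $c$-family has $s+a+1$ arrows, so the middle of the quiver carries $2s+a+2$ arrows from vertex $2$ to vertex $3$. The quiver of figure \ref{fanoncom} carries the arrows $l_0, \dots, l_k$ in the same position, i.e.\ $k+1$ of them, so equating the two counts forces $2s + a + 2 = k + 1$, that is $a = k - 1 - 2s$. Imposing $s \ge 0$ together with $a \ge -1$ (recalling $\mathbb{F}_{-1} \cong \mathbb{F}_1$) is then exactly the range $0 \le s < \tfrac{k+1}{2}$. I would fix the identification $l_j = b_j$ for $0 \le j \le s$ and $l_{s+1+j} = c_j$ for $0 \le j \le s+a$, together with the obvious identification of the four outer arrows $a_1, a_2, d_1, d_2$.

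The heart of the argument is the choice of parameters. When every $p_i, q_i$ is nonzero, the relations $p_i(l_i a_1 - l_{i+1} a_2)$ and $q_i(d_1 l_i - d_2 l_{i+1})$ chain all the $l$'s into a single block; to reproduce the two-block structure of figure \ref{fa2} I would set $p_s = q_s = 0$, severing the one link between $l_s = b_s$ and $l_{s+1} = c_0$, and take all remaining parameters equal to $1$ (fixing the $r_i$ likewise). Under the identification above the surviving relations are precisely $b_i a_1 = b_{i+1} a_2$ for $0 \le i < s$, $c_i a_1 = c_{i+1} a_2$ for $0 \le i < s+a$, and the two analogous families for the $d$-arrows, which is exactly the relation ideal of figure \ref{fa2}. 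Hence this point of $\mathcal{M}_k$ gives an algebra isomorphic to the tilting algebra of the given toric system, and letting $s$ run over $0 \le s < \tfrac{k+1}{2}$ with $a = k-1-2s$ produces all the claimed algebras.

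The step I expect to be the main obstacle is verifying that this specialization is a genuine isomorphism of path algebras with relations, and not merely a coincidence of quiver shapes: one must check that setting $p_s = q_s = 0$ breaks the chain without creating spurious identifications, and that the resulting ideal cuts the path algebra down to exactly the dimensions of $\Hom(\sh{E}_i, \sh{E}_j)$ predicted by $H^0\big(\mathbb{F}_a, \sh{O}((s+1)P+Q)\big)$ and $H^0\big(\mathbb{F}_a, \sh{O}((s+2)P+Q)\big)$. The only other delicate point is pinning down the role of the auxiliary parameters $r_i$, which should be chosen so that the higher-length compatibility relations agree with those governing figure \ref{fa2}.
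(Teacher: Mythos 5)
Your proposal is correct and is essentially the paper's own proof: the paper likewise specializes $p_s, q_s \to 0$ and all other parameters to $1$, identifying the resulting quiver with relations with that of figure \ref{fa2} for $a = k - 1 - 2s$. Your additional bookkeeping (counting the middle arrows to force $2s + a + 2 = k + 1$, and the explicit matching $l_j = b_j$, $l_{s+1+j} = c_j$) only makes explicit what the paper leaves implicit, including the observation that the $r_i$ play no role in the displayed relations.
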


\begin{proof}
For some integer $0 \leq s < \frac{k + 1}{2}$ we just consider the limit $p_s, q_s
\longrightarrow 0$, and $p_i, q_i
\longrightarrow 1$ for all other parameters,
we obtain precisely the quiver with relations corresponding to the strongly exceptional toric system
$P, sP + Q, P, -(s + a) P + Q$ on $\mathbb{F}_a$, where $a = k - 1 - 2s$, as in figure \ref{fa2}.
\end{proof}

This way, via the derived category, we have found a noncommutative deformation space which allows
to deform surfaces $\mathbb{F}_a$ and $\mathbb{F}_b$  into each other for $0 \leq a, b < k$ and $b \equiv
a \mod 2$.\footnote{Actually their derived categories. Note that two rational surfaces with equivalent derived
categories are isomorphic (see \cite{Huybrechts06}, \S 12).}
Note that the parametrization presented here is just chosen for simplicity, as, of course,
there are more possibilities for choosing such parameters which specialize as desired.

In more general cases it is more difficult to describe noncommutative deformations via such explicit
parametrizations. We give an example where we embed a parameter space of some rational surfaces
as locally closed subset into a noncommutative parameter space. Consider algebraic surfaces $X$ which can
be obtained by simultaneously blowing up $\mathbb{P}^2$ in $t > 1$ points. By Proposition \ref{blowuponce},
the following is a strongly exceptional toric system on $X$:
\begin{equation*}
R_t, R_{t - 1} - R_t, \dots, R_1 - R_2, H - R_1, H, H - \sum_{i = 1}^t R_i.
\end{equation*}
Using the canonical isomorphism with $\pic(Y)$, where $Y$ is the toric surface associated to the
toric system, we can identify $\pic(X)$ and $\pic(X')$ for any two simultaneous blowups $X$ and
$X'$ of $\mathbb{P}^2$ in $t$ points. In particular, we can consider this toric system as a universal
strongly exceptional toric system for all simultaneous blow-ups of $\mathbb{P}^2$ in $t$ points.

The natural parameter space for these blow-ups is given by the open subset of $(\mathbb{P}^2)^t$
given by the complement of the diagonals. We denote this parameter space by $\mathcal{M}$
and write $X \in \mathcal{M}$ for a rational surface which is a blow-up of $\mathcal{P}^2$ in $t$
distinct points. If one is interested in proper isomorphism classes,
one also has to take the diagonal action of $\operatorname{PGL}_3$ on $\mathcal{M}$
into account. However, we will neglect this aspect for simplicity.

The quiver associated to our toric system is the same for every $X \in \mathcal{M}$ and looks as
shown in figure \ref{p2blowup1}. We denote $A$ the path algebra which corresponds to this
quiver. The algebra which corresponds to our toric system then is of the form $A_X \cong A/I_X$,
where $I_X$ is an ideal of relations which depends on $X$. Both algebras are finite-dimensional:
\begin{equation*}
\dim A = 18t + 6 \quad \text{ and } \quad \dim A_X = \sum_{I} h^0(\sum_{i \in I} A_i) = 9t + 15,
\end{equation*}
where the first sum runs over all intervals $I \subset \{1, \dots, n - 1\}$. Parameter spaces we are
interested in are given by any parametrization of ideals of $A$ which contains the $I_X$ for
$X \in \mathcal{M}$. We want to describe explicitly a map $X \mapsto I_X$ by specifying the
ideal $I_X$ as explicit as possible. This way we will have an embedding of $\mathcal{M}$
into an appropriate parameter space.

For this, we have to relate the vector spaces $e_i A e_j$ and $e_i A_X e_j$, where $e_i, e_j$ are
idempotents of $A$ (and therefore of $A_X$), enumerated as in figure \ref{p2blowup1b}.
We denote $V$ a three-dimensional vector space such that $\mathbb{P}^2 \cong \mathbb{P}V$.
Then:
\begin{align*}
e_{t + 2} A e_i = e_{t + 2} A_X e_i &= \Gamma\big(X, \sh{O}(H - R_i)\big) =: H_i \text{ for } 1 \leq i \leq t\\
e_i A e_{t + 1} = e_i A_X e_{t + 1} &= \Gamma\big(X, \sh{O}(R_i)\big) \text{ for } 1 \leq i \leq t\\
e_{t + 3} A e_{t + 2} = e_{t + 3} A_X e_{t + 2} &= \Gamma\big(X, \sh{O}(H)\big) = \Gamma\big(\mathbb{P}^2, \sh{O}(H)\big) = V^*\\
e_{t + 2} A_X e_{t + 1} &= \Gamma\big(X, \sh{O}(H)\big) = \Gamma\big(\mathbb{P}^2, \sh{O}(H)\big) = V^*\\
e_{t + 3} A_X e_{t + 1} &= \Gamma\big(X, \sh{O}(2H)\big) = \Gamma\big(\mathbb{P}^2, \sh{O}(2H)\big) = S^2V^*\\
e_{t + 3} A_X e_i &= \im\big(H_i \otimes V^* \longrightarrow S^2 V^*\big)\\
e_{t + 3} A e_i & = H_i \otimes V^*\\
e_{t + 2} A e_{t + 1} &= \bigoplus_{i = 1}^t H_i\\
e_{t + 3} A e_{t + 1} &= \big( \bigoplus_{i = 1}^t H_i \big) \otimes V^*.
\end{align*}
Figure \ref{p2blowup1b} indicates the vector spaces as situated in the quiver.
\begin{figure}[htbp]
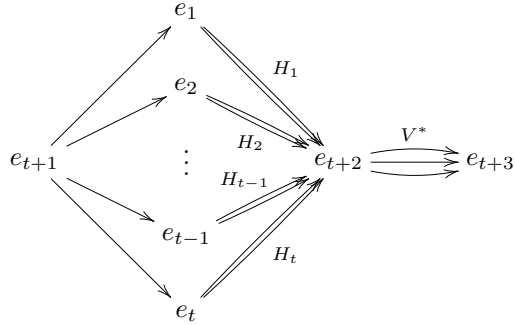

\begin{equation*}
\xygraph{!{0;<2cm,0cm>:<0cm,1cm>::}
[] *+{e_{t + 1}}="1"
[u(0.1)r] *+{\vdots}
[u(0.9)u] *+{e_1}="2"
[d] *+{e_2}="3"
[dd] *+{e_{t - 1}}="4"
[d] *+{e_t}="5"
[uur] *+{e_{t + 2}}="6"
[r] *+{e_{t + 3}}="7"
"1":@/^0.01mm/"2"
"1":@/-0mm/"3"
"1":@/-0mm/"4"
"1":@/-0mm/"5"
"2":@/^0.1em/"6"^{H_1}
"2":@/_0.1em/"6"
"3":@/^0.1em/"6"
"3":@/_0.1em/"6"_{H_2}
"4":@/^0.1em/"6"^{H_{t - 1}}
"4":@/_0.1em/"6"
"5":@/^0.1em/"6"
"5":@/_0.1em/"6"_{H_t}
"6":@/^0.5em/"7"^{V^*}
"6":@/_0.5em/"7"
"6":@/-0mm/"7"
}
\end{equation*}
\caption{Quiver for $A_X$.}
\label{p2blowup1b}
\end{figure}
Note that the map from $H_i$ to $V^*$ is just the natural inclusion of
$\Gamma\big(X, \sh{O}(H - R_i)\big)$ into $\Gamma\big(X, \sh{O}(H)\big)$.
In particular, these inclusions encode the geometric information coming from
$X$: we can identify
$H_i$ with the hyperplane in $V^*$ which is dual to the corresponding
$i$-th point in $\mathbb{P}V$ which gets blown up. We get:

\begin{theorem}\label{p2blowup}
With notation as specified above, the ideal $I_X$ is generated by:
\begin{align*}
& \ker \big( H_i \otimes V^* \longrightarrow S^2 V^*\big) \text{ for } 1 \leq i \leq t,\\
& \ker \big(\bigoplus_{i = 1}^t H_i \longrightarrow V^*\big),\\
& \ker \big(V^* \otimes V^* \longrightarrow S^2V^*).
\end{align*}
\end{theorem}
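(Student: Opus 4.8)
The plan is to study the canonical surjection $A \twoheadrightarrow A_X$ with kernel $I_X$ one block at a time. Write $W := \bigoplus_{i=1}^t H_i$ and let $\phi\colon W \to V^*$ be the second of the three maps in the statement, i.e. the sum of the natural inclusions $H_i = \Gamma\big(X,\sh{O}(H-R_i)\big) \hookrightarrow \Gamma\big(X,\sh{O}(H)\big) = V^*$; put $K_2 := \ker\phi$. Since the quiver of figure~\ref{p2blowup1b} is acyclic and the underlying line bundles are linearly ordered, both $A$ and $A_X$ are graded by the vertices, and $I_X$ is recovered from the kernels $e_b I_X e_a = \ker\big(e_b A e_a \to e_b A_X e_a\big)$ on the finitely many blocks computed just before the statement. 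First I would verify that the three displayed spaces genuinely lie in $I_X$, i.e. that the relations hold in $A_X$: for the first this is because the composite $e_i \to e_{t+2}\to e_{t+3}$ in $A_X$ is multiplication $H_i\otimes V^* \to \Gamma\big(X,\sh{O}(2H-R_i)\big) = \im(H_i\otimes V^* \to S^2V^*)$, with kernel $\ker(H_i\otimes V^*\to S^2V^*)$; for the second it is the very definition $e_{t+2}A_X e_{t+1} = \im\phi = V^*$; and for the third it is the symmetry of $V^*\otimes V^*\to S^2V^*$, once the identifications $e_{t+2}A_Xe_{t+1}\cong V^*\cong e_{t+3}A_Xe_{t+2}$ (the first of which already invokes the second relation) are in force.

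For the reverse inclusion I would compare $A/J$ and $A_X$ block by block, where $J$ denotes the two-sided ideal generated by the three families, and check that the graded dimensions sum to $\dim A_X = 9t+15$. The blocks $e_iAe_{t+1}$, $e_{t+2}Ae_i$ and $e_{t+3}Ae_{t+2}$ carry no relation and already have the correct dimensions $1,2,3$. In $e_{t+2}Ae_{t+1}=W$ acyclicity forces the only contribution to $J$ to be the second family itself (no arrow enters $e_{t+1}$, and $e_{t+2}$ is reached only through the $e_i$), so this block of $A/J$ is $W/K_2 = V^*$, as needed. Likewise $e_{t+3}Ae_i = H_i\otimes V^*$ receives only the first family (here $e_{t+3}$ is a sink and $e_i$ a source for these paths), so it becomes $\im(H_i\otimes V^*\to S^2V^*) = \Gamma\big(X,\sh{O}(2H-R_i)\big)$ of dimension $5$.

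The decisive block, and the one I expect to be the main obstacle, is the top block $e_{t+3}Ae_{t+1} = W\otimes V^*$ of dimension $6t$, which must be cut down to $e_{t+3}A_Xe_{t+1}=S^2V^*$ of dimension $6$. The map here is $\mu\circ(\phi\otimes\mathrm{id})$ with $\mu\colon V^*\otimes V^*\to S^2V^*$ the multiplication, and I would read off its kernel from the short exact sequence
\begin{equation*}
0 \longrightarrow K_2\otimes V^* \longrightarrow W\otimes V^* \overset{\phi\otimes\mathrm{id}}{\longrightarrow} V^*\otimes V^* \longrightarrow 0,
\end{equation*}
namely $\ker\big(\mu\circ(\phi\otimes\mathrm{id})\big) = (K_2\otimes V^*)\oplus\tilde L$, where $\tilde L$ is any lift of $\ker\mu = \ker(V^*\otimes V^*\to S^2V^*)$; the sum is direct since $\tilde L\cap\ker(\phi\otimes\mathrm{id})=0$, giving dimension $(6t-9)+3 = 6t-6$ and quotient $S^2V^*$. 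Now $K_2\otimes V^*$ is exactly the second family followed by the arrows $e_{t+2}\to e_{t+3}$, and $\tilde L$ is exactly a lift of the third family, so the second and third families already generate the whole kernel of this block. The subtlety I would stress is that the first family cannot replace the third: it contributes only $\bigoplus_i\wedge^2H_i$, whose image under $\phi\otimes\mathrm{id}$ is the span $\sum_i\wedge^2H_i$ of the Pl\"ucker lines of the points, and for collinear configurations (in particular whenever $t=2$) these fail to fill $\wedge^2V^* = \ker\mu$. It is precisely the third family that supplies the missing antisymmetric relations uniformly in the point configuration. Assembling the blocks, $A/J$ and $A_X$ agree in every degree, whence $J = I_X$.
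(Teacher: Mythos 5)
Your proposal is correct and follows essentially the same route as the paper's (much terser) proof: a block-by-block comparison of $A/J$ with $A_X$ via the explicit computations of the $e_iAe_j$ and $e_iA_Xe_j$, where your short exact sequence reducing the top block $W\otimes V^*$ modulo $K_2\otimes V^*$ to $V^*\otimes V^*$ is exactly the paper's observation that every length-three path passes through $e_{t+2}$ and can be represented in $e_{t+2}A_Xe_{t+1}\otimes e_{t+3}A_Xe_{t+2}\cong V^*\otimes V^*$. Your additional remark that the first family contributes only the span of the Pl\"ucker lines $\sum_i\wedge^2 H_i$, so that the third family is genuinely needed for collinear configurations (in particular $t=2$), is a correct refinement the paper leaves implicit.
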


\begin{proof}
These maps represent the relations among paths of lengths $2$ and $3$ and follow
from the explicit representations of the $e_i A e_j$ and $e_i A_X e_j$ above.
For paths of length $3$ just note that every such path passes through $e_{t + 2}$
and therefore can be represented by an element of $e_{t + 1} A_X e_{t + 2} \otimes
e_{t + 2} A_X e_{t + 3} \cong V^* \otimes V^*$.
\end{proof}

Note that the last relation corresponds to the relations of type
$a_i b_j = a_j b_i$ as for the case of $\mathbb{P}^2$ (see figure \ref{p2quiver}).

\end{document}